\newcommand{\reals}{\mathbb{R}}
\newcommand{\Tm}{{\mathcal T}}
\newcommand{\ft}{\tilde{f}}
\newcommand{\xt}{\tilde{x}}
\newcommand{\vt}{\tilde{v}}
\newcommand{\Et}{\tilde{E}}
\newcommand{\rhot}{\tilde{\rho}}
\def\ft{\tilde{f}}
\def\tti{\tilde{t}}
\def\xt{\tilde{x}}
\def\vt{\tilde{v}}
\def\Et{\tilde{E}}
\def\rhot{\tilde{\rho}}
\def\Flux{{\mathcal F}}
\newcommand{\M}[1]{M_{#1}}
\newtheorem{theorem}{Theorem}[section]
\newtheorem{remark}{Remark}
\journal{J. Comput. Appl. Math.}
\begin{document}

\begin{frontmatter}


\title{A Class of Quadrature-Based Moment-Closure Methods
with Application to the Vlasov-Poisson-Fokker-Planck System in the High-Field Limit}

\author[author1]{Yongtao Cheng}
\ead{cheng@math.wisc.edu}

\author[author2]{James A. Rossmanith\fnref{labc}}
\ead{rossmani@iastate.edu}

\address[author1]{University of Wisconsin,
Deparment of Mathematics, 480 Lincoln Drive,
Madison, WI 53706, USA}

\address[author2]{Iowa State University, Department of Mathematics,
396 Carver Hall, Ames, IA 50011, USA}

\fntext[labc]{Corresponding author}

\begin{abstract}
Quadrature-based moment-closure methods are a class of
approximations that replace high-dimensional kinetic
descriptions with lower-dimensional 
fluid models. In this work we investigate some of the
properties of a sub-class of these methods based on
bi-delta, bi-Gaussian, and bi-B-spline representations.
We develop a high-order discontinuous Galerkin (DG) scheme
to solve the resulting fluid systems. Finally, via this
high-order DG scheme and Strang operator splitting to handle the
collision term, we simulate the fluid-closure models in the
context of the Vlasov-Poisson-Fokker-Planck system in the
high-field limit. We demonstrate numerically that the proposed scheme
is asymptotic-preserving in the high-field limit.
\end{abstract}

\begin{keyword} 
Asymptotic-Preserving; Discontinuous Galerkin; Vlasov-Poisson; Fokker-Planck; 
Moment-Closure; Moment-Realizability; Plasma Physics; High-Order Schemes
\end{keyword}

\end{frontmatter}

\section{Introduction}
\label{sec:intro}
The focus of this work is on 1D fluid models of plasma with a class of fluid-closure
approximations known as {\it quadrature-based moment-closures}.
In particular, in this work we are interested in applying these fluid-closure
approximations to the one-dimensional form the Vlasov-Poisson-Fokker-Planck (VPFP) equations
(see for example Bonilla et al. \cite{article:BoCaSo97}):
\begin{gather}
 \label{eqn:VPFP_dim}
\ft_{,\tti} + \vt \ft_{,\xt} - \frac{e}{m} \Et \ft_{,\vt}  = 
	\mu \left( \vt \ft + \frac{k_B \Theta}{m} \ft_{,\vt}  \right)_{,\vt}, \quad
\Et_{,\xt} = \frac{e}{m \epsilon_0} \left( \rhot_0 - \rhot \right),
\end{gather}
where $\tti \in \reals$ is time, $\xt \in \reals$ is the spatial coordinate,
$\vt \in \reals$ is the velocity, $\ft(\tti,\xt,\vt)$ is the probability density function
for electrons, $\Et(\tti,\xt)$ is the electric field, and $\rhot = \int m \tilde{f} d\vt$ is the 
electron mass density. 
In order to avoid confusion between indices and partial derivatives, we adopt the convention from general relativity of using a comma to denote partial derivatives.
The parameters in this equation are
the elementary charge $e$, the electron mass $m$, the Boltzmann constant $k_B$,
the temperature of the equilibrium state $\Theta$, the stationary background
ion mass density $\rhot_0(\xt)$, and the collision frequency $\mu$.
In the above expression $\tilde{\cdot}$ is used to denote dimensional dependent and
independent variables (i.e., these decorations will be removed after
non-dimensionalization).  These equations describe the dynamics of electrons (as represented
by the PDF $\ft(\tti,\xt,\vt)$) that evolve via Coulomb interactions and collisions in the form
of a Fokker-Planck drift-diffusion operator. The Fokker-Planck operator tries to drive the
system to a thermodynamic equilibrium with constant temperature $\Theta$.

\subsection{The Vlasov-Poisson-Fokker-Planck system in the high-field limit}
Fluid-closure methods as described in this work will generally not accurately
approximate solutions of \eqref{eqn:VPFP_dim} in the collisionless limit,
$\mu \rightarrow 0$. Instead, we focus here on the 
high-collision limit; and in particular, the {\it high-field limit}, which describes
the long-time, large-scale, high-collisional, and large electric field limit
of the VPFP system.
The high-field limit of the VPFP has been considered by many authors both
theoretically and numerical, including by Arnold et al. \cite{article:ArCaGaSh01},
Bonilla and Soler \cite{article:BoSo01},
Cercignani et al. \cite{CeGaLe97}, Nieto et al. \cite{article:nieto01}, and
Wang and Jin \cite{article:WaJi11}.

In order to derive the high-field limit of the VPFP system we introduce
a non-dimensionalization via the characteristic
scaling constants: $T$ (time), $L$ (length), $E_0$ (electric field),  and $N$ (number density),
such that
\begin{gather*}
 \tti= T t, \quad \xt = L x, \quad \vt = L T^{-1} v , \quad  \Et = E_0 E, \quad
 \rhot = m N \rho, \quad \ft = N T L^{-1} f.
 \end{gather*}
This reduces the VPFP system \eqref{eqn:VPFP_dim} to
\begin{gather*}
f_{,t} + v f_{,x} - \mu T \left( \frac{e E_0 T}{\mu m L} \right) E f_{,v} = 
  \mu T \left( vf + \left( \frac{k_B T^2 \Theta}{mL^2} \right) f_{,v} \right)_{,v}, \quad
E_{,x} = \left( \frac{eLN}{\epsilon_0 E_0} \right) \left( \rho_0 - \rho \right).
\end{gather*}
We define the dimensionless parameter $\varepsilon = \left(\mu T \right)^{-1}$
and choose $T$, $L$, and $E_0$ as follows:
\begin{equation}
\label{eqn:scales}
  T = \frac{m \epsilon_0 \mu}{N e^2}, \quad
  L = \sqrt{mk_B \Theta} \left( \frac{\epsilon_0 \mu}{N e^2} \right),
  \quad E_0 = \frac{\mu}{e} \sqrt{m k_B \Theta}.
\end{equation}
With these choices we arrive at the following non-dimensional Vlasov-Poisson Fokker-Planck (VPFP)
system:
\begin{equation}
\label{eqn:VPFP}
f_{,t} + v f_{,x} = \frac{1}{\varepsilon} \left( F \left( F^{-1} f  \right)_{,v} \right)_{,v}, \quad
E_{,x} = \rho_0 - \rho,
\end{equation}
where $\rho = \int f dv$ and $F(t,x,v)$ is the isothermal equilibrium distribution:
\begin{equation}
\label{eqn:equilibrium}
   F(t,x,v) = \frac{\rho(t,x)}{\sqrt{2\pi}} \exp\left(-\frac{1}{2} \left( v + E(t,x) \right)^2\right).
\end{equation}

The so-called {\it high-field limit} is when $\varepsilon \rightarrow 0^{+}$, which, under
the choices of the characteristic time, length, and electric field chosen in \eqref{eqn:scales}, describes
long time ($T\rightarrow \infty$), large-scale ($L\rightarrow \infty$), and large electric field
($E_0 \rightarrow \infty$) dynamics of the
VPFP system. In particular, Nieto et al. \cite{article:nieto01} proved
 that as $\varepsilon \rightarrow 0^+$, the solution of the VPFP system \eqref{eqn:VPFP} 
converges to the equilibrium distribution \eqref{eqn:equilibrium}, where (to leading
order in $\varepsilon$) the mass density, $\rho(t,x)$, and the electric field, $E(t,x)$, satisfy the 
following non-local advection equation:
\begin{align}
\label{eqn:rho_eq}
\rho_{,t} - \left( \rho E \right)_{,x} = 0, \quad E_{,x} = \rho_0 - \rho.
\end{align}
The non-local nature of this advection equation comes from the fact that the electric
field, which serves as the advection velocity field,
is affected globally by local modifications in the charge density. 

\subsection{Scope of this work}
Recently, Wang and Jin \cite{article:WaJi11}
developed an {\it asymptotic-preserving} scheme for the VPFP equation, 
where they modified a fully kinetic solver for
VPFP so that it remains asymptotic preserving in the high-field limit
$\varepsilon \rightarrow 0^+$. The approach has the nice property
that it can be applied for {\it any} value of
$\varepsilon>0$. 
The problem with the Wang and Jin \cite{article:WaJi11} scheme is
that if one is really interested in regimes where $\varepsilon$
is relatively small (i.e., {\it near} thermodynamic equilibrium), then their
approach is computationally expensive (i.e., requires solving a PDE in
2D rather than 1D).  

The purpose of this work is to consider fluid models for the
VPFP system that not only have the ability to capture the equilibrium dynamics
of VPFP (i.e., equation \eqref{eqn:rho_eq}), but also accurately model near-equilibrium 
dynamics. The  scope of the current work is to twofold:
\begin{enumerate}
\item We describe and investigate properties of 
	two approaches in the quadrature-based moment-closure
	framework as developed by Fox \cite{article:fox09} (bi-delta
	distribution functions) and Chalons, Fox, and Massot \cite{article:Chalons10}
	(bi-Gaussian distribution functions), and describe
	a modification of these based on bi-B-spline distributions.
	This is described in \S \ref{sec:quad-based-moment-closure}.
\item We then take this class of quadrature-based moment-closure
approaches and, via a high-order discontinuous Galerkin scheme
with Strang operator splitting for the collision operator, approximately
solve the VPFP system in the high-field limit. The numerical
method is developed in \S \ref{sec:numerical-method} and applied to two test problems
from Wang and Jin \cite{article:WaJi11} in \S \ref{sec:numerical-examples}.
\end{enumerate}


 \section{Quadrature-based moment-closure methods}
 \label{sec:quad-based-moment-closure}
In this section we describe three variants of the quadrature-based
moment-closure approach. In \S \ref{sec:delta} we 
describe the simplest of these: quadrature based on delta distributions.
Delta distributions have been used in
 several previous works including in gas dynamics
applications in Fox \cite{article:fox09} and Yan and Fox \cite{article:yuan11},
multivalued solutions of Euler-Poisson \cite{article:Li04}, and
multiphase solutions of the semiclassical limit of the
Schr\"odinger equation \cite{article:Go03,article:Jin03}.
In \S \ref{sec:gauss} we describe a generalization of this approach using
Gaussian distributions developed by Chalons, Fox, and Massot \cite{article:Chalons10}.
Finally in \S \ref{sec:hat} we propose a modification of the Gaussian distribution
approach that uses compactly supported B-splines.

\subsection{Quadrature using delta distributions}
\label{sec:delta}
In the quadrature-based moment-closure approach using
delta distributions we assume a PDF that is a sum of
delta functions with unknown weights and positions (we show
here the simplest version of this approach using only two quadrature points):
\begin{align}
\label{eqn:delta}
  \overline{f}(t,x,v) = \omega_1 \delta \left( v-\mu_1 \right) + \omega_2 \delta \left( v-\mu_2 \right),
\end{align}
where the parameters $\omega_1$, $\mu_1$, $\omega_2$, and $\mu_2$ are all functions
of $t$ and $x$. This approach is reminiscent of other discrete velocity models such
as the Broadwell model \cite{article:broad64a,article:broad64b}; however, 
a key difference is that the discrete velocities, $\mu_1$ and $\mu_2$, are potentially
different at each point in space and time.

The first four moments of \eqref{eqn:delta} are
\begin{alignat}{2}
\label{eqn:mom_delta1}
  \M{0} &= \rho &&= \omega_1 \mu^0_1 + \omega_2 \mu^0_2, \\
  \label{eqn:mom_delta2}
  \M{1} &= \rho u &&= \omega_1 \mu^1_1 + \omega_2 \mu^1_2, \\
  \label{eqn:mom_delta3}
  \M{2} &= \rho u^2 + p &&= \omega_1 \mu_1^2 + \omega_2 \mu_2^2, \\
  \label{eqn:mom_delta4}
  \M{3} &= \rho u^3 + 3 pu + q && = \omega_1 \mu_1^3 + \omega_2 \mu_2^3.
\end{alignat}
If we assume that $\rho>0$ and $p>0$, then the above relationship between
the moments $(\M{0}, \M{1}, \M{2}, \M{3})$ and the parameters $(\mu_1, \mu_2, \omega_1, \omega_2)$
is one-to-one (see discussion below).  In the absence of collisions, these moments satisfy
equations of the form:
\begin{align}
  \M{\ell,t} + \M{\ell+1,x} = 0
\end{align}
for $\ell=0,1,2,3$. The moment-closure comes from forcing $M_4$ to come from
\eqref{eqn:delta} (rather than letting it be an independent quantity):
\begin{equation}
\label{eqn:m4_delta}
	M_4 \leftarrow \overline{M}_4 \equiv \int_{-\infty}^{\infty} v^4 \overline{f} \, dv = 
		\omega_1 \mu_1^4 + \omega_2 \mu_2^4.
\end{equation}
Therefore, the {\bf moment-closure problem} is reduced to the following:
given the first four moments of the system, $(\M{0}, \M{1}, \M{2}, \M{3})$,
solve system \eqref{eqn:mom_delta1}--\eqref{eqn:mom_delta4} to obtain
$(\mu_1, \mu_2, \omega_1, \omega_2)$, then use these parameters
to calculate $\overline{M}_4$ via \eqref{eqn:m4_delta}.

Solving system  \eqref{eqn:mom_delta1}--\eqref{eqn:mom_delta4} 
is equivalent to finding the quadrature points and weights
for the following weighted Gaussian quadrature rule:
\begin{equation}
   \int_{-\infty}^{\infty} g(v) \, w(v) \, dv \, \approx \,  {\omega}_1 \, g \left( {\mu}_1 \right) 
   	+ {\omega}_2 \, g \left( {\mu}_2 \right)
\end{equation}
with weight function $w(v)$ that satisfies
\begin{equation}
 \int_{-\infty}^{\infty} v^k \, w(v) \, dv = M_k \quad \text{for} \quad k=0,1,2,3.
\end{equation}
If we attempt to make this quadrature
rule exact with $g(v) = 1$, $v$, $v^2$,  and $v^3$, we arrive at equations \eqref{eqn:mom_delta1}--\eqref{eqn:mom_delta4}.
To find the correct Gaussian quadrature rule, we invoke results from classical numerical analysis
(e.g., see pages 220-225 of Burden and Faires \cite{article:BuFa05})
and look for polynomials of degree up to two that are orthogonal in the following
weighted inner product:
\begin{equation}
\label{eqn:weight_inner_product}
	\langle g , h \rangle_{w} := \int_{-\infty}^{\infty} g(v) \, h(v) \, w(v) \, dv.
\end{equation}
Such polynomials are easily obtained by starting with monomials in $v$
and applying Gram-Schmidt orthogonalization with respect to \eqref{eqn:weight_inner_product}:
\begin{align}
  {\psi}^{(0)}(v) &= 1, \\
  {\psi}^{(1)}(v) &= v - u, \\ 
  {\psi}^{(2)}(v) &= 3 \rho p  v^2 - \left(6 \rho  p  u + 3 \rho q \right) v + \left( 3 \rho p u^2 - 3 p^2 + 3 
   u \rho  q \right).
\end{align}
The quadrature points $\mu_1$ and $\mu_2$ are the two  distinct real roots of ${\psi}^{(2)}(v)$:
\begin{align}
  {\mu}_1, \, {\mu}_2 = u + \frac{q}{2p} \mp \sqrt{\frac{p}{\rho} + \left(\frac{q}{2p}\right)^2}.
\end{align}
Once the quadrature points are known, the corresponding quadrature weights can easily
be obtained by solving equations \eqref{eqn:mom_delta1} and  \eqref{eqn:mom_delta2} for
the weights:
\begin{align}
  {\omega}_1, \, \omega_2 = \frac{\rho}{2} \pm \frac{\rho^2  q}{2 \sqrt{\rho^2  q^2 + 4 \rho  p^3}}.
\end{align}

\subsubsection{Flux form of the bi-delta system}
Once $\mu_1$, $\mu_2$, $\omega_1$, and $\omega_2$ have been computed, we
can evaluate the moment closure: replace the true $\M{4}$ with the following:
\begin{equation}
  \overline{M}_4 = \mu_1 \, \omega_1^4 +  \mu_2 \, \omega_2^4 = 
  	 \rho u^4 + 6 p u^2 + 4 u q + \frac{q^2}{p}  + \frac{p^2}{\rho}.
\end{equation}
From this we can write down the fully closed system:
\begin{equation}
	\label{eqn:system}
U_{,t} + F(U)_{,x} = 0, 
\end{equation}
where
\begin{equation}
\label{eqn:delta_U}
U = \left[ \rho, \, \rho u, \, \rho u^2 + p, \,  \rho u^3 + 3 p  u + q\right]^T
\end{equation}
 is the vector of conserved variables and 
\begin{align}
\label{eqn:delta_F}
F(U) =  \left[ \rho u, \, \rho u^2 + p, \, \rho u^3 + 3 p  u + q, \,
\rho u^4 + 6 p u^2 + 4 u q + \frac{q^2}{p}  + \frac{p^2}{\rho} \right]^T
\end{align}
is the flux function.

\subsubsection{Hyperbolic structure of the bi-delta system}
One can show that system \eqref{eqn:system} with \eqref{eqn:delta_U}--\eqref{eqn:delta_F}, assuming that $\rho>0$ and $p>0$, is a {\it weakly hyperbolic} system of PDEs.
The eigenvalues of the flux Jacobian, $F_{,U}$, are
\begin{align}
 \lambda^{(1)} = \lambda^{(2)} = \mu_1 \quad \text{and} \quad
  \lambda^{(3)} = \lambda^{(4)} = \mu_2.
\end{align}
The two linearly independent eigenvectors are
\begin{align}
   r^{(1)} = r^{(2)} = \bigl[ 1, \mu_1, \mu_1^2, \mu_1^3 \bigr]^T \quad \text{and} \quad
   r^{(3)} = r^{(4)} = \bigl[ 1, \mu_2, \mu_2^2, \mu_2^3 \bigr]^T.
\end{align}
One can show that the simple waves associated with each of these eigenvectors are
linearly degenerate:
\begin{equation}
  \nabla_U \lambda^{(k)} \cdot r^{(k)} = 0 \quad \text{for} \quad k=1,2,3,4,
\end{equation}
where $\nabla_U$ is the gradient with respect to the conserved
variables  \eqref{eqn:delta_U}.
For a detailed analysis of this system see Chalons, Kah, and Massot \cite{article:ChKaMa12}.

\subsection{Quadrature using Gaussian distributions}
\label{sec:gauss}
There are two main difficulties with moment-closure based on quadrature
via delta distributions: (1) the resulting system is only weakly hyperbolic,
which means that delta shocks \cite{article:Bou94,article:ChKaMa12,article:ChLiu03} are generically present in the system,
and (2) a large number of delta functions may be required to get good agreement
with smooth distributions with large support. 
Yuan and Fox \cite{article:yuan11} developed
an adaptive Gaussian quadrature strategy to help overcome this problem. 
A possible alternative to using large number of quadrature points  was developed Chalons et al.
\cite{article:Chalons10} who introduced a quadrature moment-closure
based on replacing the Dirac delta functions with Gaussian distribution functions.
In particular, in order to simplify the moment inversion equations, each Gaussian
is assumed to have the same standard deviation. In the case of 
two Gaussian distributions this results in an assumed distribution function of the form:
 \begin{align}
 \label{eqn:bigaussf}
  \overline{f}(t,x,v) = \frac{\omega_1}{\sqrt{2\pi \sigma}} \exp\left( -\frac{\left(v-\mu_1 \right)^2}{2\sigma} \right)
  	+ \frac{\omega_2}{\sqrt{2\pi \sigma}} \exp\left( -\frac{\left(v-\mu_2 \right)^2}{2\sigma} \right),
\end{align}
where $\sigma$ is the width. In the limit as $\sigma\rightarrow 0$ we recover the 
Dirac delta distribution moment-closure method.
In this section we describe the moment-inversion algorithm needed 
to convert between moments and the parameters in representation \eqref{eqn:bigaussf}.
Furthermore, we briefly discuss the hyperbolic structure of the resulting evolution
equations for the moments of \eqref{eqn:bigaussf}.

There are now five free parameters: $\left(\mu_1, \mu_2, \omega_1, \omega_2, \sigma \right)$, which can be obtained by solving the following moment-inversion problem on the first five moments:
\begin{align}
\M{0} &= \omega_1 \mu^0_1 + \omega_2 \mu^0_2, \\
  \M{1} &= \omega_1 \mu^1_1 + \omega_2 \mu^1_2, \\
  \M{2} &= \omega_1 \mu_1^2 + \omega_2 \mu_2^2 + \sigma \left(\omega_1 \mu^0_1 + \omega_2 \mu^0_2\right), \\
  \M{3} & = \omega_1 \mu_1^3 + \omega_2 \mu_2^3 + 3 \sigma \left(\omega_1 \mu^1_1 + \omega_2 \mu^1_2\right), \\
  \M{4} & = \omega_1 \mu_1^4 + \omega_2 \mu_2^4 + 6\sigma \left( \omega_1 \mu_1^2 + \omega_2 \mu_2^2 \right)
  	+  3 \sigma^2  \left( \omega_1 \mu_1^0 + \omega_2 \mu_2^0 \right).
\end{align}
We write this system in terms of primitive variables: 
\begin{align}
\label{eqn:MG1}
    \omega_1 \mu^0_1 + \omega_2 \mu^0_2 \, &=  \, \rho, \\
\label{eqn:MG2}
    \omega_1 \mu^1_1 + \omega_2 \mu^1_2  \, &=  \, \rho u, \\
\label{eqn:MG3}
    \omega_1 \mu_1^2 + \omega_2 \mu_2^2  \, &=  \,  \rho u^2 + \alpha p, \\
\label{eqn:MG4}
    \omega_1 \mu_1^3 + \omega_2 \mu_2^3 \,  &=  \, \rho u^3 + 3 \alpha pu + q , \\
\label{eqn:MG5}
   \omega_1 \mu_1^4 + \omega_2 \mu_2^4  \, &=  \, \rho u^4 + 6 \alpha p u^2 + 4 q u + r 
  + \frac{3p^2 (\alpha^2-1)}{\rho},
\end{align}
where we have introduced the parameter $\alpha$:
\begin{gather}
   \sigma = \frac{p}{\rho} \left( 1 - \alpha \right).
\end{gather}
We note that the maximum allowable value of $\alpha$ is clearly 1 (otherwise $\sigma$ would
be negative). In fact if $\alpha=1$, then the bi-Gaussian and the bi-delta function
representations are equivalent. What is perhaps less obvious is that the
minimum allowed value of $\alpha$ is zero. In fact, as $\alpha \rightarrow 0$,
the bi-Gaussian representation collapses into a single Gaussian distribution
with width given by the temperature $p/\rho$.
Physically reasonable conditions on the primitive variables
guarantee that the $\alpha$ that comes from 
solving the moment equations 
\eqref{eqn:MG1}--\eqref{eqn:MG5} above satisfies $\alpha \in [0,1]$ (see Theorem \ref{thm:mom-realz}).

\begin{theorem}[Moment-realizability condition, modified from Chalons et al. \cite{article:Chalons10}]
\label{thm:mom-realz}
Assume that the primitive variables satisfy the following conditions:
\begin{itemize}
\item Positive density: \quad $0< \rho$,
\item Positive pressure: \quad $0<p$,
\item Lower bound on $r$: \quad $\frac{p^3 + \rho q^2}{\rho p} \le r$,
\item If $q=0$, bound on $r$: \quad $\frac{p^2}{\rho} \le r \le \frac{3p^2}{\rho}$.
\end{itemize}
\begin{enumerate}
\item If $q\ne0$ then there exists a unique 
$\alpha \in (0,1]$ that satisfies the following cubic polynomial:
\begin{equation}
\label{eqn:poly-alpha}
{\mathcal P}(\alpha) = 2 p^3 \alpha^3 + \left(\rho r
 - 3 p^2 \right)  p \alpha - \rho q^2 = 0.
\end{equation}
Furthermore, from this $\alpha$ we can uniquely obtain 
the quadrature abscissas and weights in order to 
exactly solve system \eqref{eqn:MG1}--\eqref{eqn:MG5}:
\begin{align}
\label{eqn:mus}
\mu_1 , \mu_2 &=  u + \frac{q}{2 p \alpha}
  \mp {\sqrt{\frac{p\alpha}{\rho} + \left( \frac{q}{2p\alpha} \right)^2 }}, \\
  \label{eqn:omegas}
{\omega}_1, \, \omega_2 &= \frac{\rho}{2} \pm \frac{\rho^2  q}{2 \sqrt{\rho^2  q^2 + 4 \rho  p^3 \alpha^3}}.
\end{align}

\item If $q=0$ and $\frac{p^2}{\rho} \le r < \frac{3p^2}{\rho}$  then there exists a unique 
$\alpha \in (0,1]$ such that
\begin{equation}
\label{eqn:alpha_q0}
   \alpha = \sqrt{\frac{3 p^2 - \rho r}{2p^2}}.
\end{equation}
Furthermore, in this case
the quadrature abscissas \eqref{eqn:mus} and weights \eqref{eqn:omegas}
 are again the unique solutions of system \eqref{eqn:MG1}--\eqref{eqn:MG5}.

\item If $q=0$ and $r = \frac{3p^2}{\rho}$, then $\alpha=0$. This case corresponds to a single Gaussian
 distribution.  In this case we lose uniqueness of the quadrature abscissas and weights, but without loss of
 generality we can take
\begin{align}
\label{eqn:mus_zero}
\mu_1 , \mu_2 &= u, \\
\label{eqn:omegas_zero}
{\omega}_1, \, \omega_2 &= \frac{\rho}{2},
\end{align}
and still exactly solve system \eqref{eqn:MG1}--\eqref{eqn:MG5}.

\end{enumerate}
\end{theorem}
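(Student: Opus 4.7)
The plan is to reduce the bi-Gaussian moment-inversion to the bi-delta problem already solved in \S\ref{sec:delta} with a modified effective pressure $\tilde p = \alpha p$, and then extract a single scalar equation for $\alpha$ from the fifth moment. First I would substitute $\sigma = p(1-\alpha)/\rho$ into equations (\ref{eqn:MG1})--(\ref{eqn:MG4}). After peeling off the $\sigma$-contributions (which involve only lower moments already fixed), these four equations take exactly the form of the bi-delta moment equations (\ref{eqn:mom_delta1})--(\ref{eqn:mom_delta4}) but with $p$ replaced by $\alpha p$. Hence, for any candidate $\alpha > 0$, the quadrature abscissas and weights are forced to be the bi-delta formulas under that substitution, which immediately yields (\ref{eqn:mus})--(\ref{eqn:omegas}).

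Next I would substitute these formulas into the fifth-moment equation (\ref{eqn:MG5}). Using the closed-form expression $\omega_1 \mu_1^4 + \omega_2 \mu_2^4 = \rho u^4 + 6 \tilde p u^2 + 4 u q + q^2/\tilde p + \tilde p^2/\rho$ (obtained by the same algebra that produced $\overline{M}_4$ in the bi-delta case) and cleaning up the $\alpha$-dependent terms yields, after multiplying through by $\rho \alpha p$, exactly the cubic $\mathcal{P}(\alpha)=0$ in (\ref{eqn:poly-alpha}). So the whole inversion is reduced to locating a root $\alpha \in (0,1]$ of this cubic; consistency of the other four equations is then automatic by construction.

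The main technical step is showing existence and uniqueness of such a root under the stated hypotheses. I would evaluate $\mathcal{P}(0) = -\rho q^2 \le 0$ and $\mathcal{P}(1) = p(\rho r - p^2) - \rho q^2$; the lower bound $r \ge (p^3+\rho q^2)/(\rho p)$ is precisely the condition $\mathcal{P}(1) \ge 0$, so the Intermediate Value Theorem gives a root in $[0,1]$. For uniqueness of the \emph{positive} root, the coefficient signs of $\mathcal{P}$ are $(+,0,\pm,-)$, i.e.\ exactly one sign change, so Descartes' rule gives exactly one positive real root; equivalently $\mathcal{P}'(\alpha)=6p^3\alpha^2 + p(\rho r - 3p^2)$ has at most one positive zero, past which $\mathcal{P}$ is strictly increasing. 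When $q\neq 0$, $\mathcal{P}(0)<0$ rules out $\alpha=0$, so the root lies in $(0,1]$, proving case (i). When $q=0$ the cubic factors as $\alpha\bigl[2p^3\alpha^2 + p(\rho r - 3p^2)\bigr]=0$; under $p^2/\rho \le r < 3p^2/\rho$ the bracketed factor vanishes at the positive value in (\ref{eqn:alpha_q0}), giving case (ii), while $r = 3p^2/\rho$ forces the unique root to be $\alpha=0$, in which case $\sigma = p/\rho$ and the abscissas coincide at $u$; direct substitution shows any weight split with $\omega_1+\omega_2=\rho$ solves (\ref{eqn:MG1})--(\ref{eqn:MG5}), justifying the canonical choice (\ref{eqn:mus_zero})--(\ref{eqn:omegas_zero}) and proving case (iii). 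The cleanest obstacle is packaging the uniqueness argument so it handles both signs of $\rho r - 3p^2$ in a uniform way; the Descartes-rule invocation is the shortest route.
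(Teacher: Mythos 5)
Your proposal is correct and follows essentially the same route as the paper: fix $\alpha$, solve \eqref{eqn:MG1}--\eqref{eqn:MG4} for the abscissas and weights (you do this by substituting $p \mapsto \alpha p$ into the bi-delta formulas, the paper by repeating the Gram--Schmidt construction --- the same mechanism), reduce \eqref{eqn:MG5} to the cubic \eqref{eqn:poly-alpha}, and locate a root in $(0,1]$ via the sign checks ${\mathcal P}(0)\le 0$, ${\mathcal P}(1)\ge 0$. The only cosmetic difference is the uniqueness step, where you invoke Descartes' rule (or monotonicity of ${\mathcal P}$ past its unique positive critical point) while the paper uses convexity of ${\mathcal P}$ on $(0,1]$; both are valid one-line arguments.
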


\begin{proof}
We take each point in turn.
\begin{enumerate}
\item Let us momentarily assume that $\alpha$ is known and that $\alpha \in (0,1]$. 
In this situation we are left with four unknowns: $(\mu_1, \mu_2, \omega_1, \omega_2)$, which
are determined by satisfying the first four moment equations: 
\eqref{eqn:MG1}--\eqref{eqn:MG4}. Just as in the case of quadrature-based moment-closures
using delta distributions, these equations can be 
solved by constructing a set of  polynomials that are mutually orthogonal
in the inner product \eqref{eqn:weight_inner_product}. The
weight function satisfies:
\begin{align}
\int_{-\infty}^{\infty} v^k \, w(v) \, dv = 
\begin{cases}
 \rho & \quad \text{if} \quad k=0, \\
  \rho u & \quad \text{if} \quad k=1, \\
  \rho u^2 + \alpha p & \quad \text{if} \quad k=2, \\
    \rho u^3 + 3 \alpha pu + q & \quad \text{if} \quad k=3.
\end{cases}
\end{align}
Up to degree 2 these mutually orthogonal polynomials can be written as
\begin{align}
\psi^{(0)}(v) &= 1, \\
\psi^{(1)}(v) &= v - u, \\
\psi^{(2)}(v) &= v^2 - \left(2u + \frac{q}{p \alpha} \right) v
	 + \left( u^2 + \frac{q u}{p \alpha} - \frac{p \alpha}{\rho} \right).
 \end{align}
 It is easy to show that the two real roots of $\psi^{(2)}(v)$ are \eqref{eqn:mus}.
 
 \medskip
 
 The weights \eqref{eqn:omegas} are obtained by plugging \eqref{eqn:mus} into 
 \eqref{eqn:MG1} and \eqref{eqn:MG2} and solving the resulting $2\times2$ linear
system for $\omega_1$ and $\omega_2$.

\medskip

Finally, we must obtain a formula for $\alpha \in (0,1]$. 
This is achieved by plugging \eqref{eqn:mus} and \eqref{eqn:omegas}
into the final moment equation: \eqref{eqn:MG5}. After simplification
this yields the cubic polynomial equation given by \eqref{eqn:poly-alpha}.
We note that under our assumptions we find that
\[
	{\mathcal P}(0) = -\rho q^2 < 0 \quad \text{and} \quad {\mathcal P}(1)=\rho p \left( r - \frac{p^3+\rho q^2}{\rho p} \right) \ge 0.
\]
Therefore by continuity of ${\mathcal P}(\alpha)$ we are guaranteed that there exists at least one root in $(0,1]$. 
To establish that there is a unique root in $(0,1]$ we note that ${\mathcal P}(\alpha)$ is convex in $(0,1]$:
\[
	{\mathcal P}''(\alpha) = 12 p^3 \alpha > 0 \quad \text{in} \quad (0,1].
\]	

\item If $q=0$ and $\frac{p^2}{\rho} \le r < \frac{3p^2}{\rho}$ we note that \eqref{eqn:poly-alpha}
reduces to
\[
	{\mathcal P}(\alpha) = \alpha \left( 2 p^2 \alpha^2 + \left(\rho r
 - 3 p^2 \right)  \right) = 0.
\]
The unique root of ${\mathcal P}(\alpha)$ in $(0,1]$ is given by \eqref{eqn:alpha_q0}.
With this value of $\alpha$ we can again solve \eqref{eqn:MG1}--\eqref{eqn:MG5}
using the $\mu_1$, $\mu_2$, $\omega_1$, and $\omega_2$ given by
\eqref{eqn:mus} and \eqref{eqn:omegas}.

\item If $q=0$ and $r = \frac{3p^2}{\rho}$ the only solution of
\eqref{eqn:poly-alpha} is $\alpha=0$. 
In this case the moment equations \eqref{eqn:MG1}--\eqref{eqn:MG5} reduce to
\[
     M_k = \omega_1 \mu_1^k + \omega_2 \mu_2^k \quad \text{for} \quad k=0,1,2,3,4.
\]
This system has an infinite number of solutions of the form:
\[
      \mu_1 = \mu_2 = u \quad \text{and} \quad \omega_1 + \omega_2 = \rho.
\]
Without loss of generality we take \eqref{eqn:mus_zero} and \eqref{eqn:omegas_zero}.

\end{enumerate}
\end{proof}

\subsubsection{Flux form of the bi-Gaussian system}
After obtaining $\alpha$, $\mu_1$, $\mu_2$, $\omega_1$, and $\omega_2$, we
impose the following moment-closure:
\begin{equation}
\label{eqn:biG_M5}
  \overline{M}_5 = \mu_1 \, \omega_1^5 +  \mu_2 \, \omega_2^5
  + 10 \sigma M_3 - 15 \sigma^2 M_1
  = \rho u^5 + 10 p u^3 + \frac{15 p^2 u}{\rho} + \alpha \tilde{M}_5,
\end{equation}
where
\begin{align}
\tilde{M}_5 &= \left( \frac{\tilde{q}^3}{p^2} + \frac{5 \tilde{q}^2 u}{p} + 10 \tilde{q} u^2 + \frac{10 p \tilde{q}}{\rho} \right) - 
 \frac{2 p \alpha}{\rho} \left(4 \tilde{q} + 5 p u \right), \\
 \label{eqn:qtilde}
 \tilde{q} &= q/\alpha \quad \text{(we set $\tilde{q}=0$ if $\alpha=0$)}.
\end{align}
Finally, we write the closed bi-Gaussian quadrature-based moment-closure
system in the form \eqref{eqn:system}, where
\begin{align}
\label{eqn:biGU}
U &= 
\left[
\rho, \, \rho u, \, \rho u^2 + p, \,  \rho u^3 + 3 p  u + q, \, \rho u^4 + 6 p u^2 + 4 q u + r
\right]^T, \\
\label{eqn:biGF}
F(U) &= 
\left[
\rho u, \, \rho u^2 + p, \, \rho u^3 + 3 p  u + q, \,
\rho u^4 + 6 p u^2 + 4 q u + r, \, \overline{M}_5
\right]^T.
\end{align}

\subsubsection{Hyperbolic structure of the bi-Gaussian system}
The flux Jacobian of the bi-Gaussian system described above
can be written in the following form:
\begin{equation}
F_{,U} = 
\begin{bmatrix}
0 & 1 & 0 & 0 & 0 \\
0 & 0 & 1 & 0 & 0 \\
0 & 0 & 0 & 1 & 0 \\
0 & 0 & 0 & 0 & 1 \\
\overline{M}_{5, M_0} &
\overline{M}_{5, M_1} &
\overline{M}_{5, M_2} &
\overline{M}_{5, M_3} &
\overline{M}_{5, M_4}
\end{bmatrix},
\end{equation}
where $U=(M_0, M_1, M_2, M_3, M_4)^T$ and $\overline{M}_{5}$ is given by
\eqref{eqn:biG_M5}.
The eigenvalues and right eigenvectors of the flux Jacobian are of the form
\begin{equation}
\lambda^{(k)} = z_k \quad \text{and} \quad
 r^{(k)} = \left[ 1, \, z_k, \, z_k^2, \, z_k^3, \, z_k^4 \right]^T,
\end{equation}
and the left eigenvectors can be written as
\begin{equation}
 \ell^{(k)} =  
\frac{\left[ \,\prod\limits_{j=1}^5 z_j, \, -\sum\limits_{\substack{j=1 \\ \ell=j+1 \\ m=\ell+1}}^{3, 4, 5}
  z_j z_{\ell} z_m, \, \sum\limits^{4,5}_{\substack{j=1 \\ \ell=j+1}} 
	z_j z_{\ell},\, -\sum\limits_{j=1}^5 z_j, \, 1 \right]^T}{\prod\limits^5_{j=1}  (z_k - z_j)},
\end{equation}
where all of the above sums and products exclude the index $k$.
Therefore, the eigenstructure hinges on the values of the five numbers: $z_k$ for $k=1,2,3,4,5$.
Unfortunately, we are not able obtain these quantities in closed form.
However, it is possible to calculate approximate values for these quantities in certain
asymptotic limits. In particular, one important limit that is useful later on in this work
is the near thermodynamic equilibrium limit, which is characterized here by $\mu_1 \approx \mu_2$.
In this limit the five distinct $z_k$'s are given by
\begin{align}
   z_1, z_2 &= u + \frac{2 \tilde{q}}{5p} - 
 \sqrt{\frac{\left(5 \pm \sqrt{10} \right) p (1 - \alpha)}{\rho}}
 + {\mathcal O}\left( | \mu_2 - \mu_1 |^2 \right), \\
 z_3 &= u + \frac{2 \tilde{q}}{5p} + {\mathcal O}\left( | \mu_2 - \mu_1 |^3 \right), \\
 z_4, z_5 &= u + \frac{2 \tilde{q}}{5p} + 
 \sqrt{\frac{\left(5 \mp \sqrt{10} \right) p (1 - \alpha)}{\rho}}
 + {\mathcal O}\left( | \mu_2 - \mu_1 |^2 \right).
\end{align}
Assuming that $\rho>0$ and $p>0$ and noting that $\alpha \approx 0$ if $\mu_1 \approx \mu_2$,
we see that in this limit the system is strongly hyperbolic. 
We can also approximately compute 
\begin{align}
\nabla_U \lambda^{(k)} \cdot r^{(k)} = 
\frac{c_k q p^2  (1 - \alpha)^2 (14 p^3 \alpha^3 + 5 \rho q^2)}{20 \rho (4 p^3 \alpha^3 
+\rho q^2)^2} + {\mathcal O}\left( | \mu_2 - \mu_1 |^{-2} \right),
\end{align}
where 
\[
c_k = \left\{ 2+\sqrt{10}, \, 2-\sqrt{10},  \, \frac{3}{4}, \, 2-\sqrt{10}, \, 2+\sqrt{10} \right\}.
\]
We note that in this limit $\nabla_U \lambda^{(k)} \cdot r^{(k)}$  changes sign
if $q$ changes sign, which means that there must exist some state, $U^{\star}$,
for which $\nabla_U \lambda^{(k)} \cdot r^{(k)} = 0$. This shows that the waves
in this system are {\it non-genuinely nonlinear} and may admit composite
wave solutions (e.g., see \cite{article:Liu74}).

In order to illustrate how the bi-Gaussian behaves, we show numerical
solutions of two Riemann problems using the discontinuous Galerkin scheme as described in
\S\ref{sec:dg-method} -\S\ref{sec:riemann}  . In Figure \ref{fig:riemann} we show the
solution to a Riemann problem with a heat flux, $q$, that is striclty positive. 
In this problem the solution is a set of five classical waves (i.e., shocks and rarefactions).
In contrast, in Figure \ref{fig:chalons} we show the solution to a Riemann problem
with heat flux, $q$, that changes sign over the simulation domain.  
In this case we see a solution
with two compound waves, each of which is a rarefaction connected to a shock,
 propagating in opposite directions.

\begin{figure}
\begin{tabular}{cc}
   (a)\includegraphics[width=51mm]{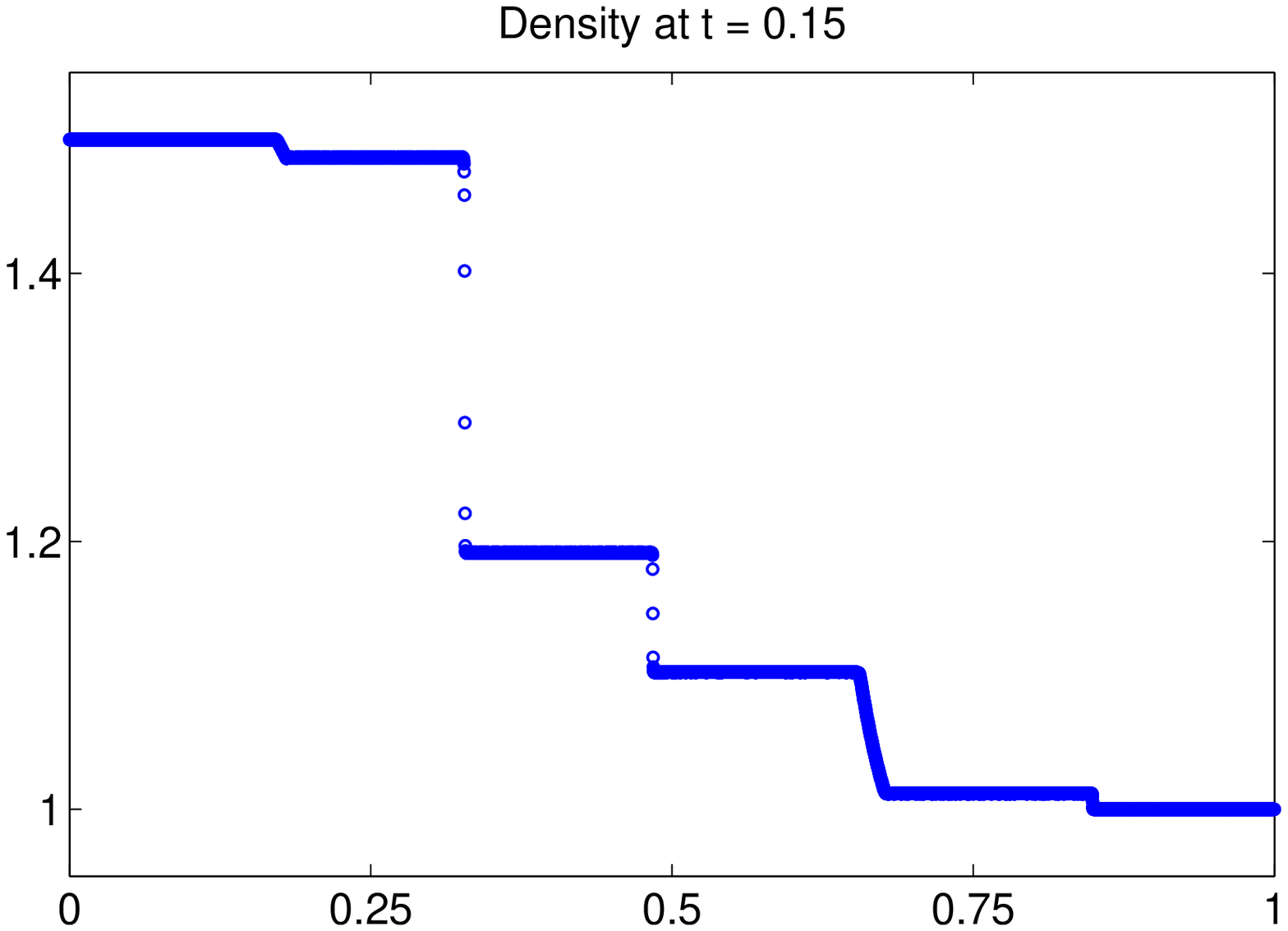} &
   (b)\includegraphics[width=51mm]{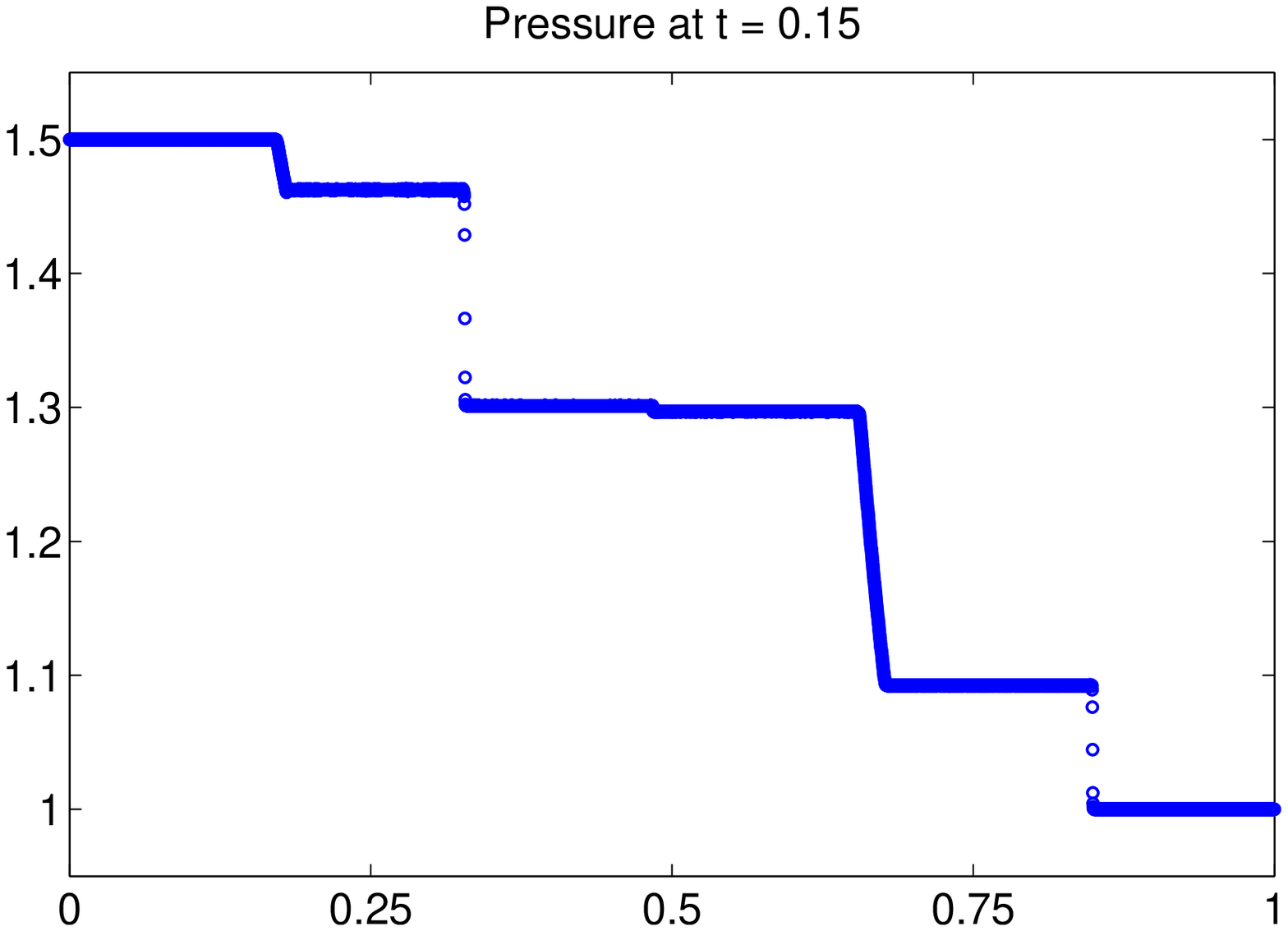} \\
   (c)\includegraphics[width=51mm]{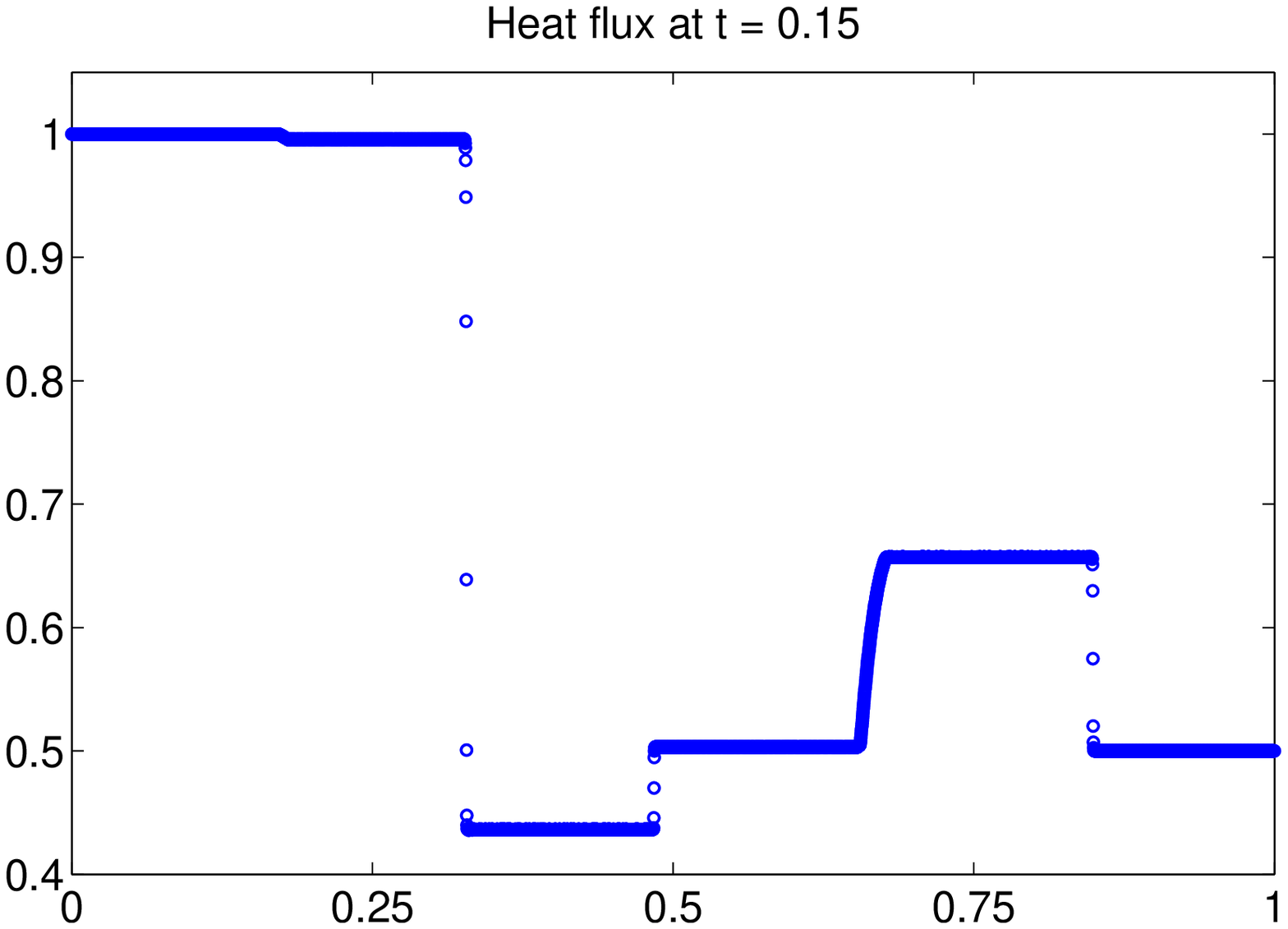} &
    (d)\includegraphics[width=51mm]{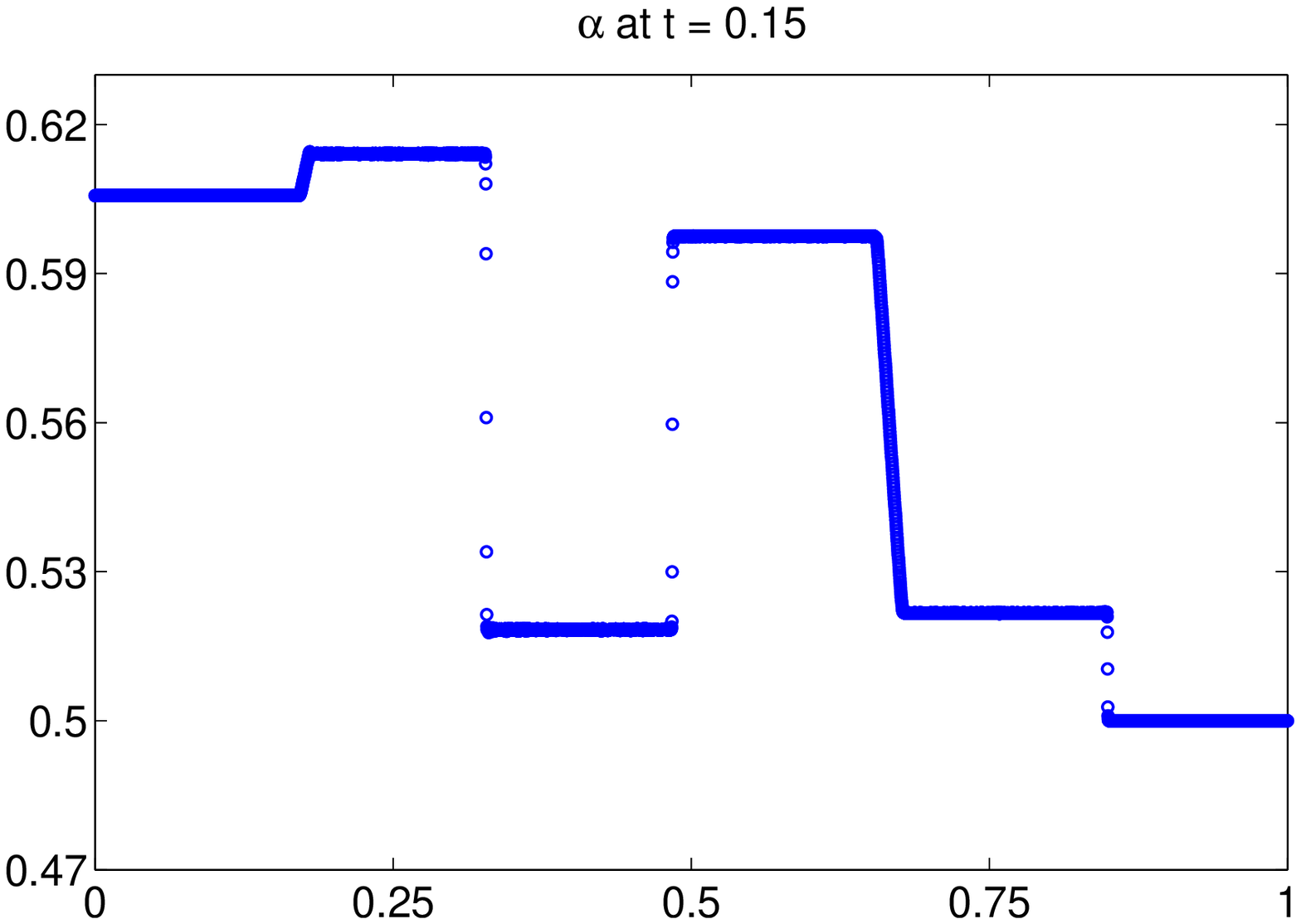} \\
   (e)\includegraphics[width=51mm]{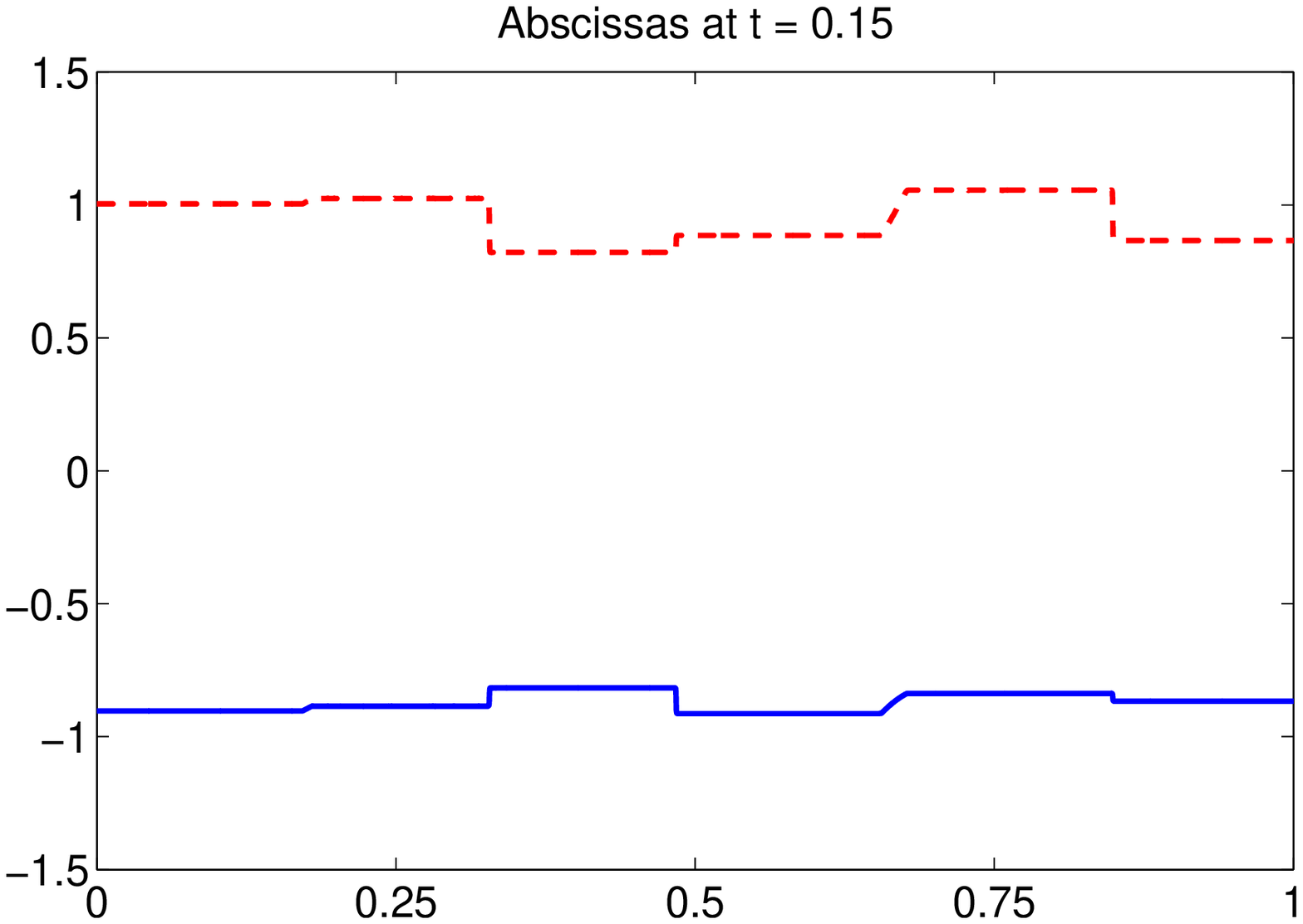} &
   (f)\includegraphics[width=51mm]{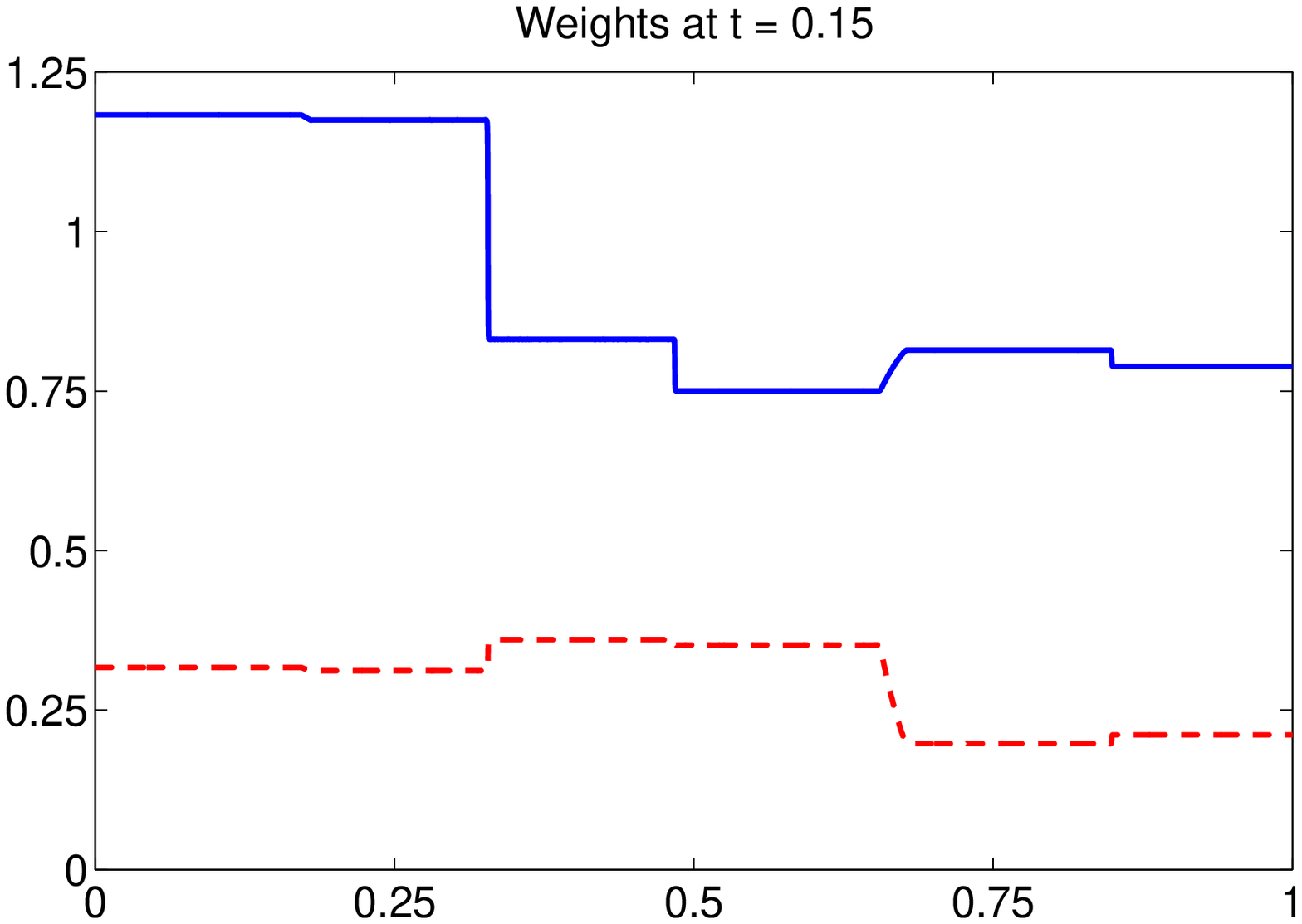}
\end{tabular}
  \caption{A shock tube problem for the bi-Gaussian system. In this example
  the initial states are $(\rho, u, p, q, r)_{\text{left}} = (1.5, -0.5, 1.5, 1.0, 4.5)$ and
  $(\rho, u, p, q, r)_{\text{right}} = (1.0, -0.5, 1.0, 0.5, 3.0)$. This data is chosen so
  that $q>0 \, \forall x,t$, ensuring that we do not encounter points where the
  convexity changes. Shown in these panels are the (a) density ($\rho$), (b) pressure ($p$),
  (c) heat flux ($q$), (d) width parameter ($\alpha$), (e) quadrature abscissas ($\mu_1, \, \mu_2$), and
  (f)  quadrature weights ($\omega_1, \, \omega_2$). The resulting solution shows,
  counting waves from left to right, a 1-rarefaction, 2-shock, 3-shock, 4-rarefaction, and 5-shock.
  \label{fig:riemann}}
\end{figure}

\begin{figure}
\begin{tabular}{cc}
   (a)\includegraphics[width=51mm]{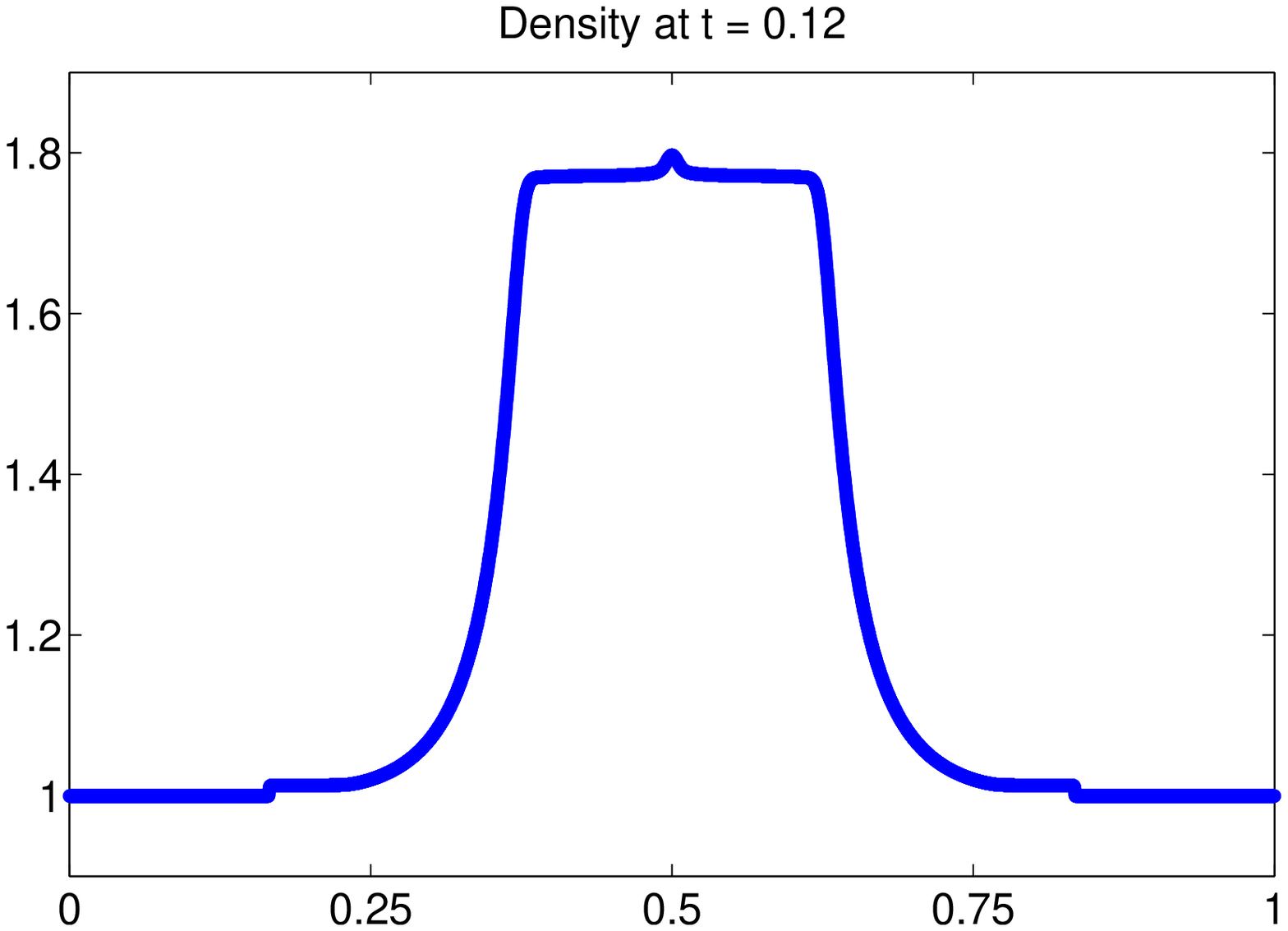} &
   (b)\includegraphics[width=51mm]{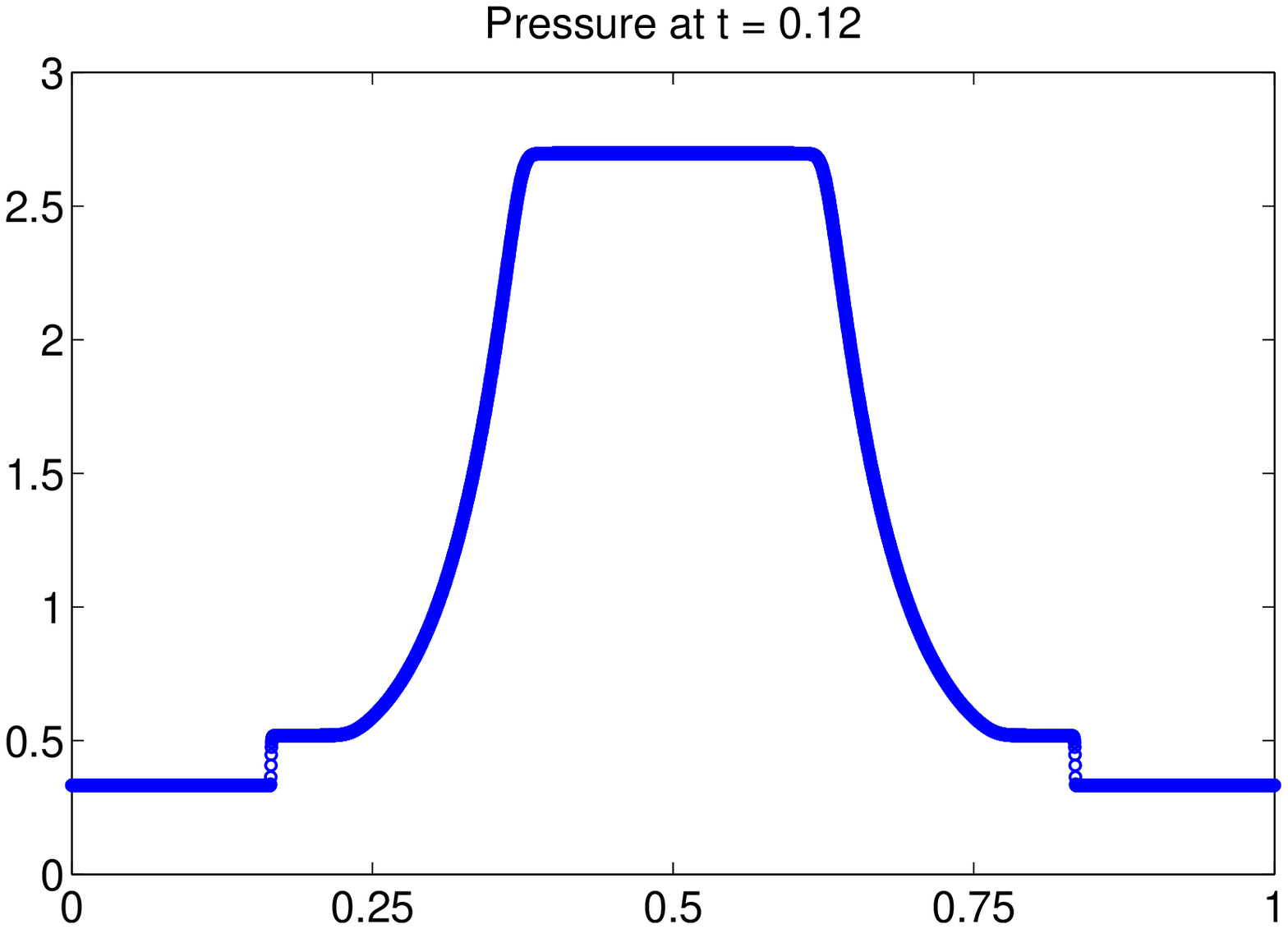} \\
   (c)\includegraphics[width=51mm]{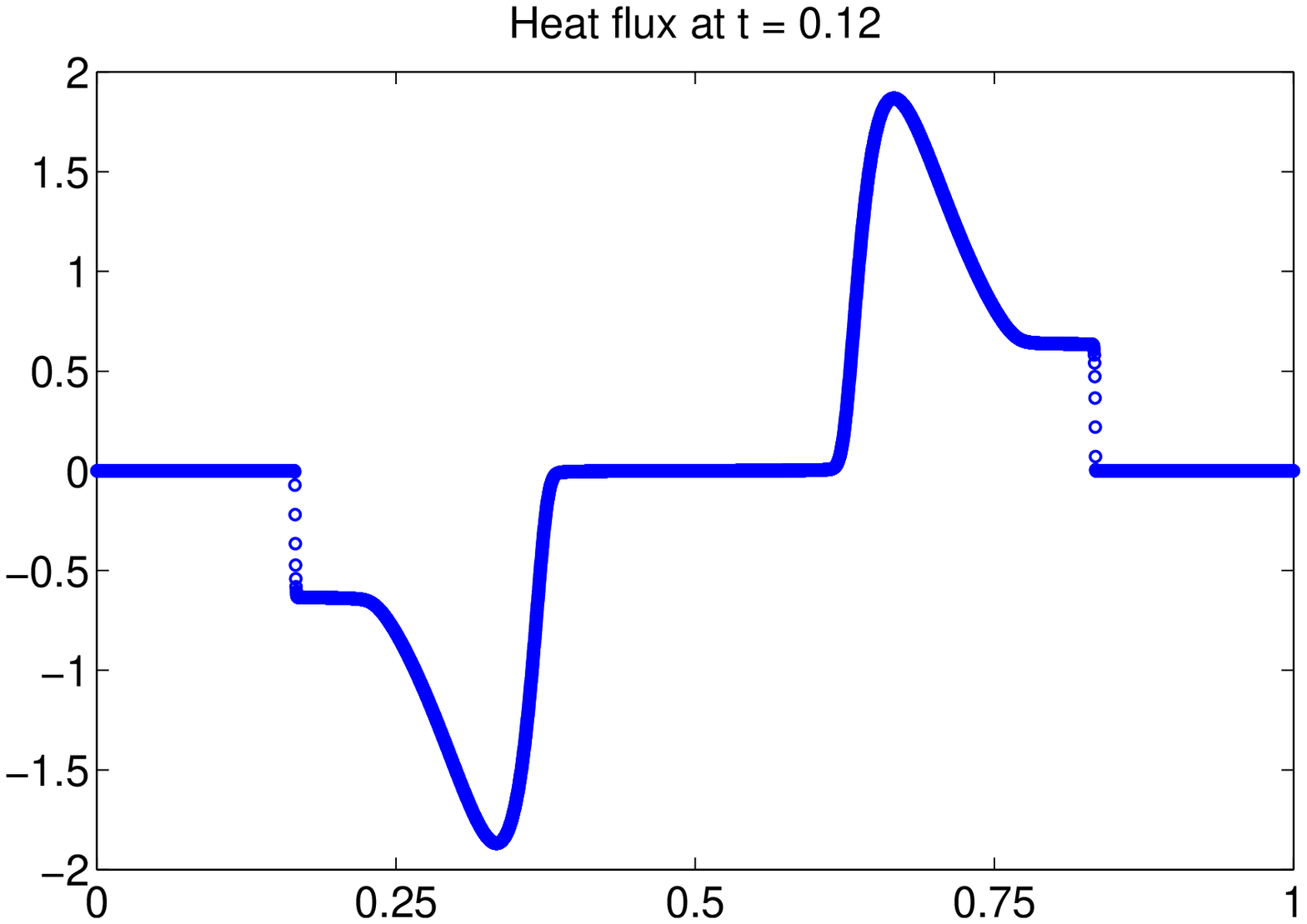} &
    (d)\includegraphics[width=51mm]{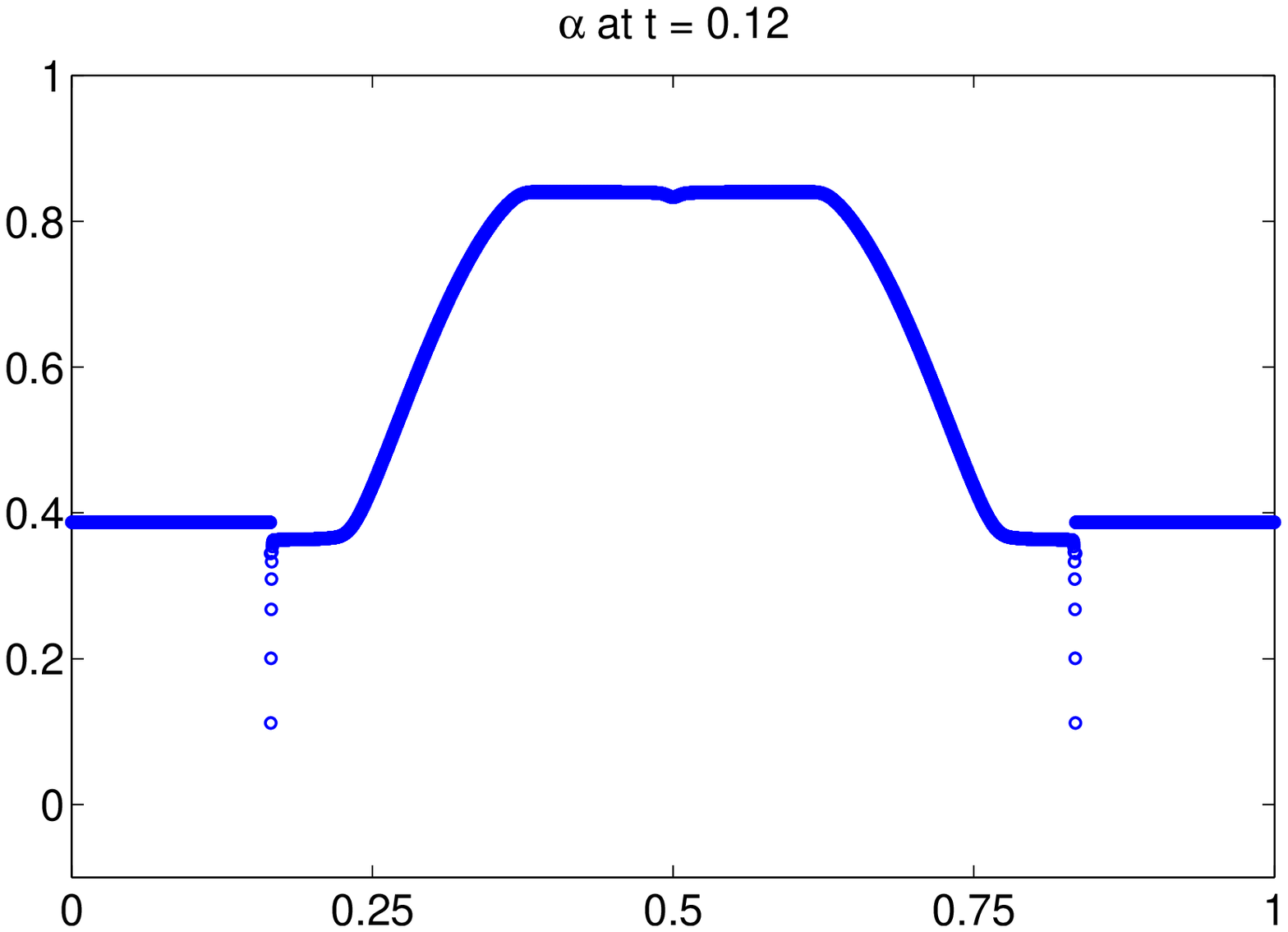} \\
   (e)\includegraphics[width=51mm]{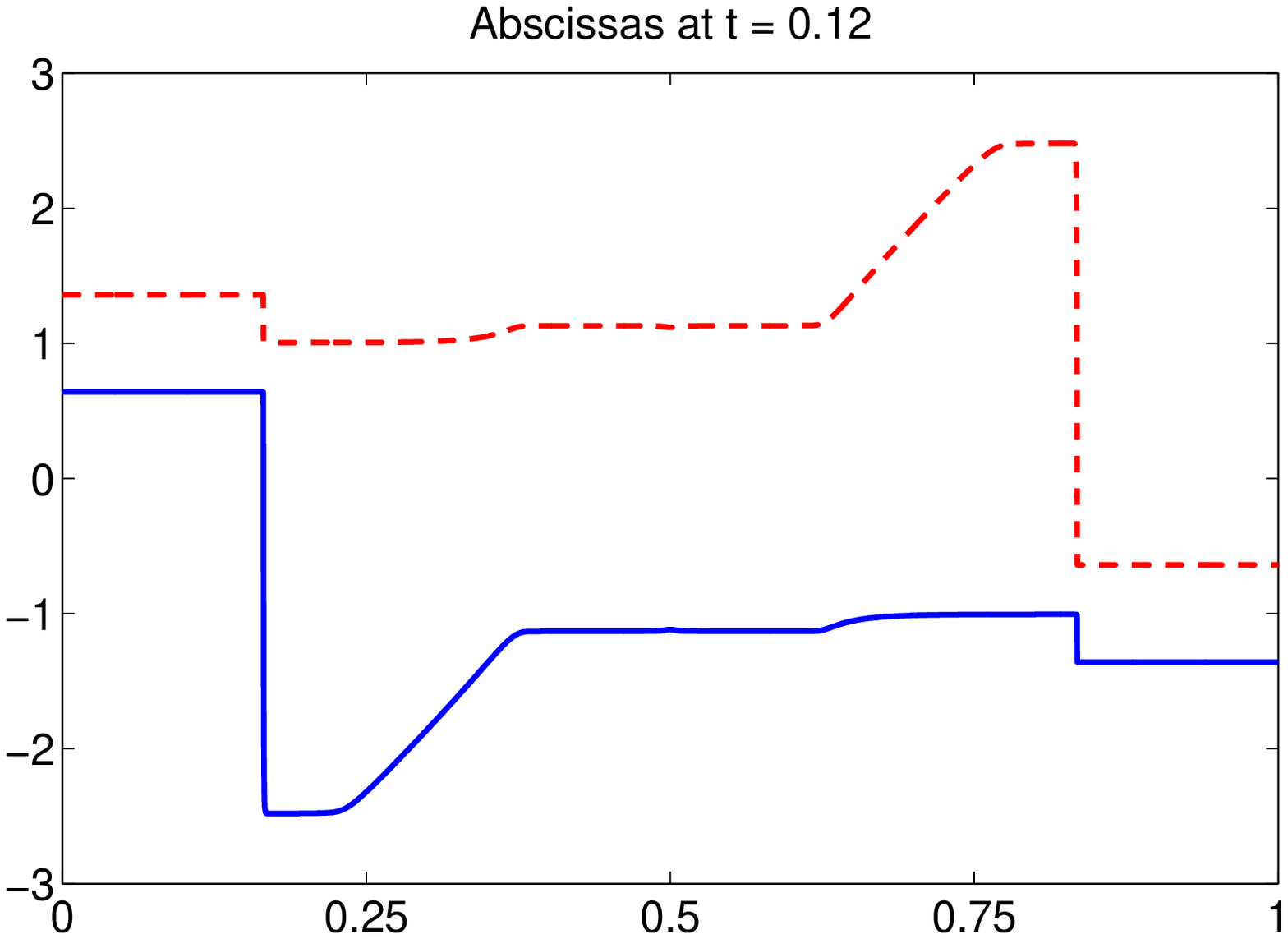} &
   (f)\includegraphics[width=51mm]{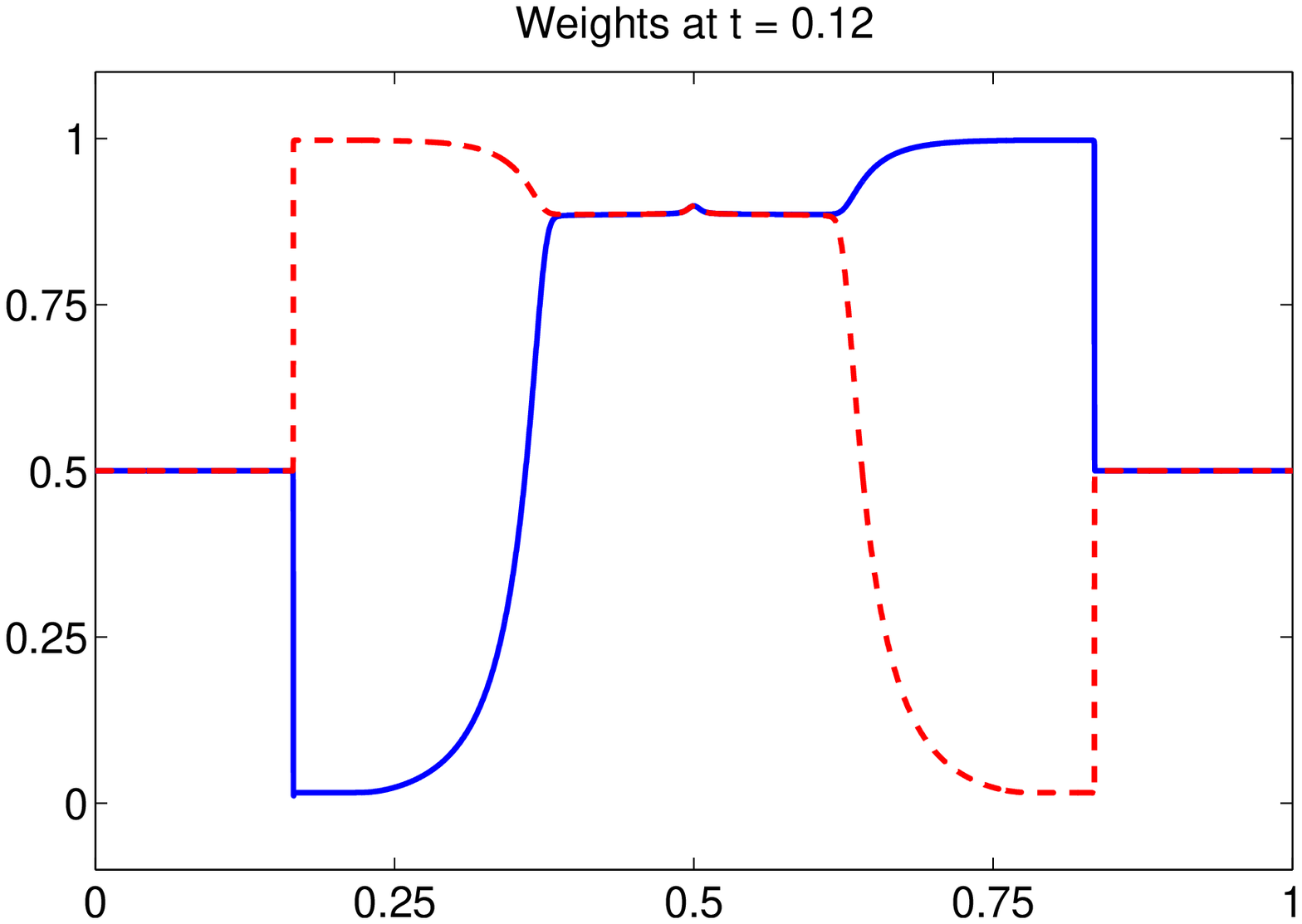}
\end{tabular}
  \caption{A shock tube problem for the bi-Gaussian system. In this example
  the initial states are $(\rho, u, p, q, r)_{\text{left}} = (1, 1, 1/3, 0, 0.3)$ and
  $(\rho, u, p, q, r)_{\text{right}} = (1, -1, 1/3, 0, 0.3)$. This problem is similar to the one
  shown in Chalons et al. \cite{article:Chalons10}; however, we have  reduced the initial value of $r$ from $1/3$ to $0.3$ in 
  order to emphasis the 1-shock and 5-shock. Shown in these panels are the (a) density ($\rho$), (b) pressure ($p$),
  (c) heat flux ($q$), (d) width parameter ($\alpha$), (e) quadrature abscissas ($\mu_1, \, \mu_2$), and
  (f)  quadrature weights ($\omega_1, \, \omega_2$). The resulting solution shows
  a 1-shock connected to a 2-rarefaction, as well as a 4-rarefaction connected to a 5-shock. We note that having
  compound waves is typical of systems with non-convex fluxes (see for example pages 350--357 of LeVeque
  \cite{book:Le02}.
  \label{fig:chalons}}
\end{figure}

\subsection{Quadrature using ${\mathcal C}^{0}$ B-splines}
\label{sec:hat}
From the above discussion we can view the bi-delta and the bi-Gaussian closure
methods as members of the same family of methods, where the bi-delta is one
extreme (compactly supported distributions with zero width) and the bi-Gaussian is the other (non-compactly supported
distributions with maximal standard deviation as allowed in this representation). 
Using this point-of-view, we can also construct methods that are in-between these
two extremes. One simple example of this is a bi-distribution representation based on
${\mathcal C}^{0}$ B-splines:
\begin{align}
\label{eqn:bsplines}
  \overline{f}(t,x,v) = \omega_1 B^0_{\sigma} \left( v - \mu_1 \right) 
  	+ \omega_2 B^0_{{\sigma}} \left( v - \mu_2 \right),
\end{align}
where
\begin{align}
  B^0_{{\sigma}}(v) = \begin{cases}
  	\frac{2}{\sigma} \left( 2 v + {\sqrt{\sigma}} \right) & \text{if} \, \, -{\sqrt{\sigma}} \le 2v \le 0, \\
	\frac{2}{\sigma} \left( \sqrt{\sigma} - 2v  \right) & \text{if} \, \,  0 \le 2v \le {\sqrt{\sigma}}, \\
	0 & \text{otherwise}.
  \end{cases}
\end{align}
If we introduce the parameter 
\begin{equation}
   \sigma = \frac{24p}{\rho} \left( 1 - \alpha \right),
\end{equation}
and force \eqref{eqn:bsplines} to have as its first five moments $M_0$,
 $M_1$, $M_2$, $M_3$, $M_4$, then we again arrive at system
 \eqref{eqn:MG1}--\eqref{eqn:MG4}, but now with a slightly modified
 equation to match the $M_4$ moment:
 \begin{equation}
 \label{eqn:MG5_bspline}
 \omega_1 \mu_1^4 + \omega_2 \mu_2^4 = 
 	\rho u^4 + 4 q u + 6 \alpha p u^2  + r +  \frac{6p^2}{5\rho} (3 \alpha+2) (\alpha-1).
 \end{equation}
 \begin{theorem}[Moment-realizability condition]
\label{thm:mom-realz-bspline}
Assume that the primitive variables satisfy the following conditions:
\begin{itemize}
\item Positive density: \quad $0< \rho$,
\item Positive pressure: \quad $0<p$,
\item Bound on $r$: \quad $\frac{q^2}{p} + \frac{p^2}{\rho} \le r \le \frac{13 q^2}{3 p} + \frac{33 p^2}{13 \rho}$.
\end{itemize}
Then there exists a unique 
$\alpha \in \left[\frac{3}{13},1\right]$ that satisfies the following cubic polynomial:
\begin{equation}
\label{eqn:poly-alpha-bspline}
{\mathcal P}(\alpha) = 13 p^3 \alpha^3 - 6 p^3 \alpha^2 + \alpha p \left( 5 r \rho-12 p^2 \right)
- 5 \rho q^2 = 0.
\end{equation}
Using this $\alpha$ and the definitions for the abscissas and weights
given by \eqref{eqn:mus} and \eqref{eqn:omegas}, we can 
exactly solve the moment inversion problem given by equations \eqref{eqn:MG1}--\eqref{eqn:MG4} and
\eqref{eqn:MG5_bspline}.
\end{theorem}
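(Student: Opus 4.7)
The plan is to follow the same strategy as the proof of Theorem \ref{thm:mom-realz}, leveraging the fact that the first four moment equations \eqref{eqn:MG1}--\eqref{eqn:MG4} are \emph{identical} to those of the bi-Gaussian case. I would therefore first treat $\alpha \in [3/13,1]$ as a parameter and invoke the bi-Gaussian derivation verbatim: applying Gram--Schmidt orthogonalization in the weighted inner product and solving the resulting quadrature system produces exactly the formulas \eqref{eqn:mus} and \eqref{eqn:omegas} for the abscissas and weights. The B-spline ansatz only modifies the fifth moment equation, so only the cubic constraint on $\alpha$ needs to be re-derived.

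To derive that cubic, I would substitute \eqref{eqn:mus} and \eqref{eqn:omegas} into \eqref{eqn:MG5_bspline}. A useful shortcut is to note that the right-hand sides of \eqref{eqn:MG5_bspline} and \eqref{eqn:MG5} differ by exactly
\[
\frac{3 p^{2}}{5\rho}\bigl(\alpha-1\bigr)^{2},
\]
so the B-spline cubic is obtained from the Gaussian cubic by an explicit correction. In fact one finds $\mathcal{P}(\alpha) = 5\,G(\alpha) + 3 p^{3}\alpha(\alpha-1)^{2}$, where $G(\alpha) = 2 p^{3}\alpha^{3} + (\rho r - 3 p^{2}) p \alpha - \rho q^{2}$ is the bi-Gaussian cubic from Theorem \ref{thm:mom-realz}, and expanding this matches \eqref{eqn:poly-alpha-bspline} exactly. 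This bookkeeping is the main algebraic step and, although tedious, is entirely mechanical.

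Once the cubic is in hand, existence of a root in $[3/13, 1]$ would follow from the intermediate value theorem applied to the endpoint values. Direct computation gives
\[
\mathcal{P}(1) = 5\rho p \Bigl( r - \tfrac{q^{2}}{p} - \tfrac{p^{2}}{\rho}\Bigr),
\qquad
\mathcal{P}\bigl(\tfrac{3}{13}\bigr) = \tfrac{15 p}{169}\bigl(13 \rho r - 33 p^{2}\bigr) - 5\rho q^{2},
\]
so the lower bound $r \ge q^{2}/p + p^{2}/\rho$ yields $\mathcal{P}(1) \ge 0$, while the upper bound $r \le 13 q^{2}/(3p) + 33 p^{2}/(13\rho)$ yields $\mathcal{P}(3/13) \le 0$.

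For uniqueness I would compute $\mathcal{P}''(\alpha) = 6 p^{3}(13 \alpha - 2)$, which is strictly positive on $[3/13, 1]$ since $13\alpha \ge 3 > 2$. Strict convexity combined with the endpoint signs precludes more than one zero of $\mathcal{P}$ in the interval, giving uniqueness. The abscissas and weights evaluated at this $\alpha$ then solve \eqref{eqn:MG1}--\eqref{eqn:MG4} by construction, and \eqref{eqn:MG5_bspline} by the defining property $\mathcal{P}(\alpha)=0$. The hardest part will be the explicit algebraic verification of \eqref{eqn:poly-alpha-bspline}, but the shortcut $\mathcal{P} = 5 G + 3 p^{3}\alpha(\alpha-1)^{2}$ reduces the work to the bi-Gaussian computation plus a small correction.
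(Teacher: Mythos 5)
Your proposal is correct and follows essentially the same route as the paper's proof: the abscissas and weights \eqref{eqn:mus}--\eqref{eqn:omegas} handle \eqref{eqn:MG1}--\eqref{eqn:MG4} for any admissible $\alpha$, substitution into \eqref{eqn:MG5_bspline} yields the cubic \eqref{eqn:poly-alpha-bspline}, and the endpoint values ${\mathcal P}(3/13)\le 0 \le {\mathcal P}(1)$ together with ${\mathcal P}''>0$ on the interval give existence and uniqueness, with all of your intermediate computations matching the paper's. The only addition is your decomposition ${\mathcal P}(\alpha)=5G(\alpha)+3p^3\alpha(\alpha-1)^2$ relating the B-spline cubic to the bi-Gaussian one, which is a correct and convenient bookkeeping device but not a different argument.
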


\begin{proof}
The abscissas and weights
given by \eqref{eqn:mus} and \eqref{eqn:omegas} automatically satisfy  \eqref{eqn:MG1}--\eqref{eqn:MG4}
for any $\alpha \in (0,1]$. Therefore, the only thing left to do is to satisfy equation \eqref{eqn:MG5_bspline}.
Using \eqref{eqn:mus} and \eqref{eqn:omegas}, one finds that \eqref{eqn:MG5_bspline} reduces
to the cubic polynomial  \eqref{eqn:poly-alpha-bspline}. We compute the following:
\begin{gather*}
	{\mathcal P}\left( \frac{3}{13} \right) = -5 \rho q^2- \frac{495 p^3}{169}  + \frac{15 r \rho p}{13}  \le 0,
	\quad
	{\mathcal P}\left( 1 \right) = -5 \rho q^2 - 5 p^3 + 5 r \rho p \ge 0, \\
	\text{and} \quad {\mathcal P}''(\alpha) > 0 \quad \forall \alpha> \frac{2}{13},
\end{gather*}
which completes the proof.
\end{proof}

\begin{remark}
Because the bi-B-spline ansatz is compactly supported, there is a maximum value of $r$ that
can represented. In particular, if $M_0$ through $M_4$ represent the moments of
a Gaussian distribution, the value of $r=3p^2/\rho$ will exceed the maximum allowed value
in the above theorem. In practice we can remedy this situation by taking
$\alpha = \frac{3}{13}$ (i.e., the minimum allowed value) whenever $r$ exceeds
the maximum allowed value of $\frac{13 q^2}{3 p} + \frac{33 p^2}{13 \rho}$.
\end{remark}

The moment closure imposed by the bi-B-spline ansatz is the
following replacement for the $M_5$ moment:
\begin{equation}
\begin{split}
\overline{M}_5 &= \rho u^5 + 10 u^2 \left( p u +  q \right)
 + \frac{2 p q}{\rho} \left( 5 - 4 \alpha \right) \\
 &+  \frac{ p^2 u}{\rho} \left(12 +  6\alpha -13\alpha^2 \right)
 +  \frac{5 q^2 u}{p \alpha} 
 + \frac{q^3}{p^2 \alpha^2}.
 \end{split}
\end{equation}
The bi-B-spline moment-closure has two advantages over the
bi-Gaussian system: (1) $\alpha$ is uniformly bounded away
from 0, which means we don't have to worry about the same
degeneracies as in the bi-Gaussian case; and (2)
the distribution function $\overline{f}(t,x,v)$ is compactly supported
and piecewise linear, which makes computing integrals such
as those needed in the flux-vector splitting method
described in \S \ref{sec:riemann} simpler.
In Figure \ref{fig:riemann_bspline} we show a simulation using the bi-B-spline moment-closure
on the same initial data as used in Figure \ref{fig:riemann}.
These results show, at least in the case when $\alpha>1/3$,
that the bi-Gaussian and bi-B-spline moment-closure approaches
produce similar Riemann solutions.

\begin{figure}
\begin{tabular}{cc}
   (a)\includegraphics[width=51mm]{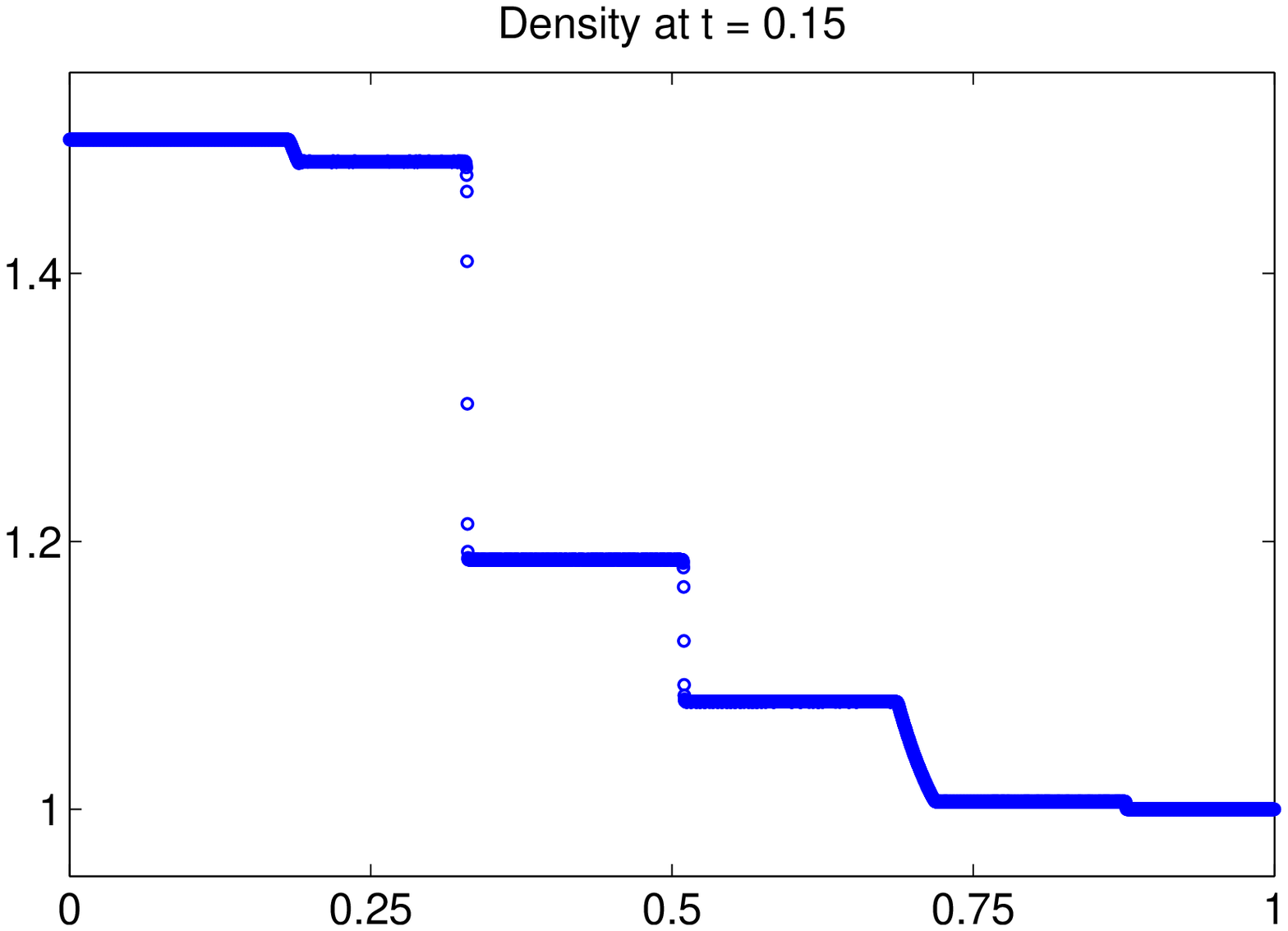} &
   (b)\includegraphics[width=51mm]{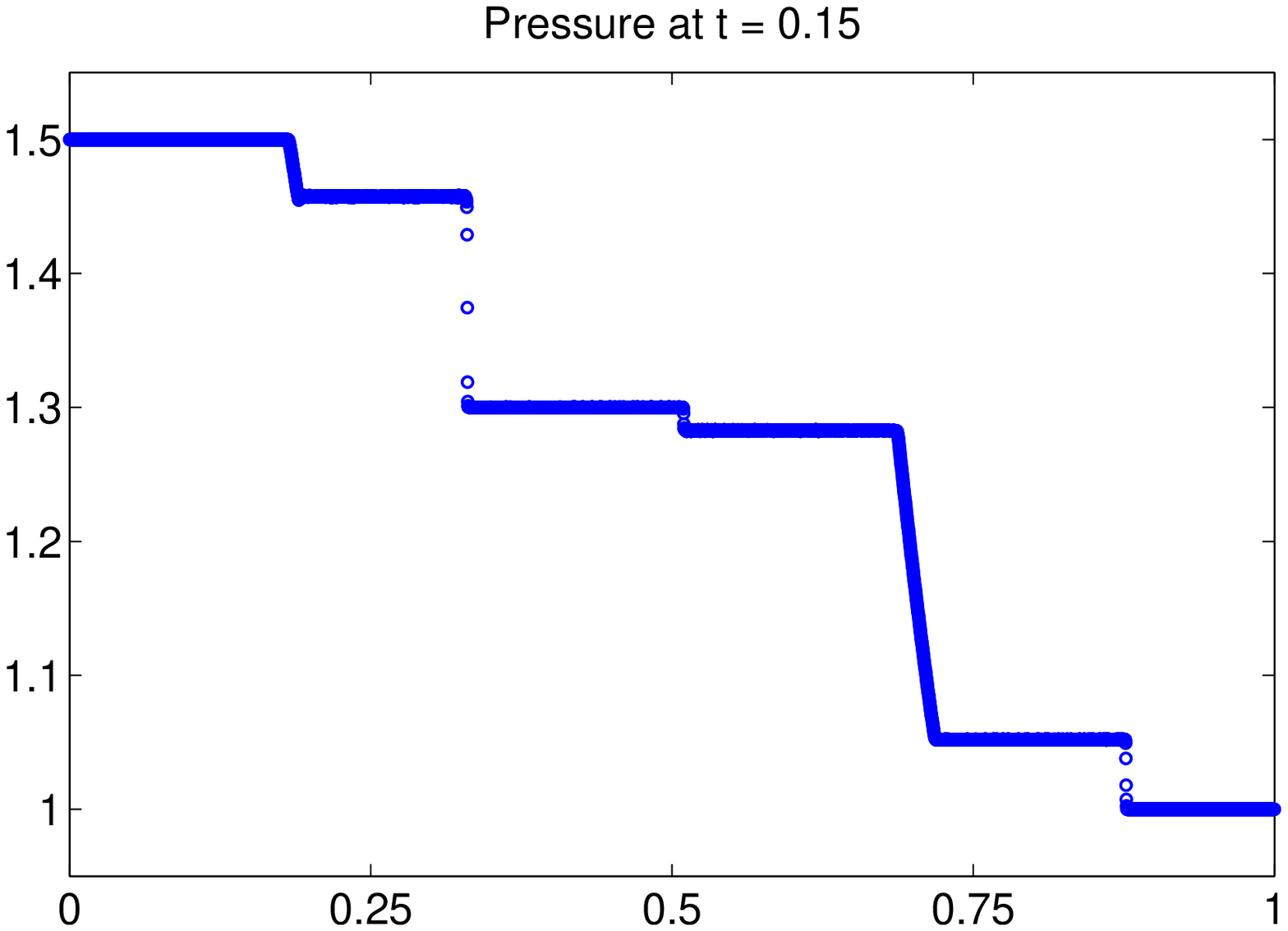} \\
   (c)\includegraphics[width=51mm]{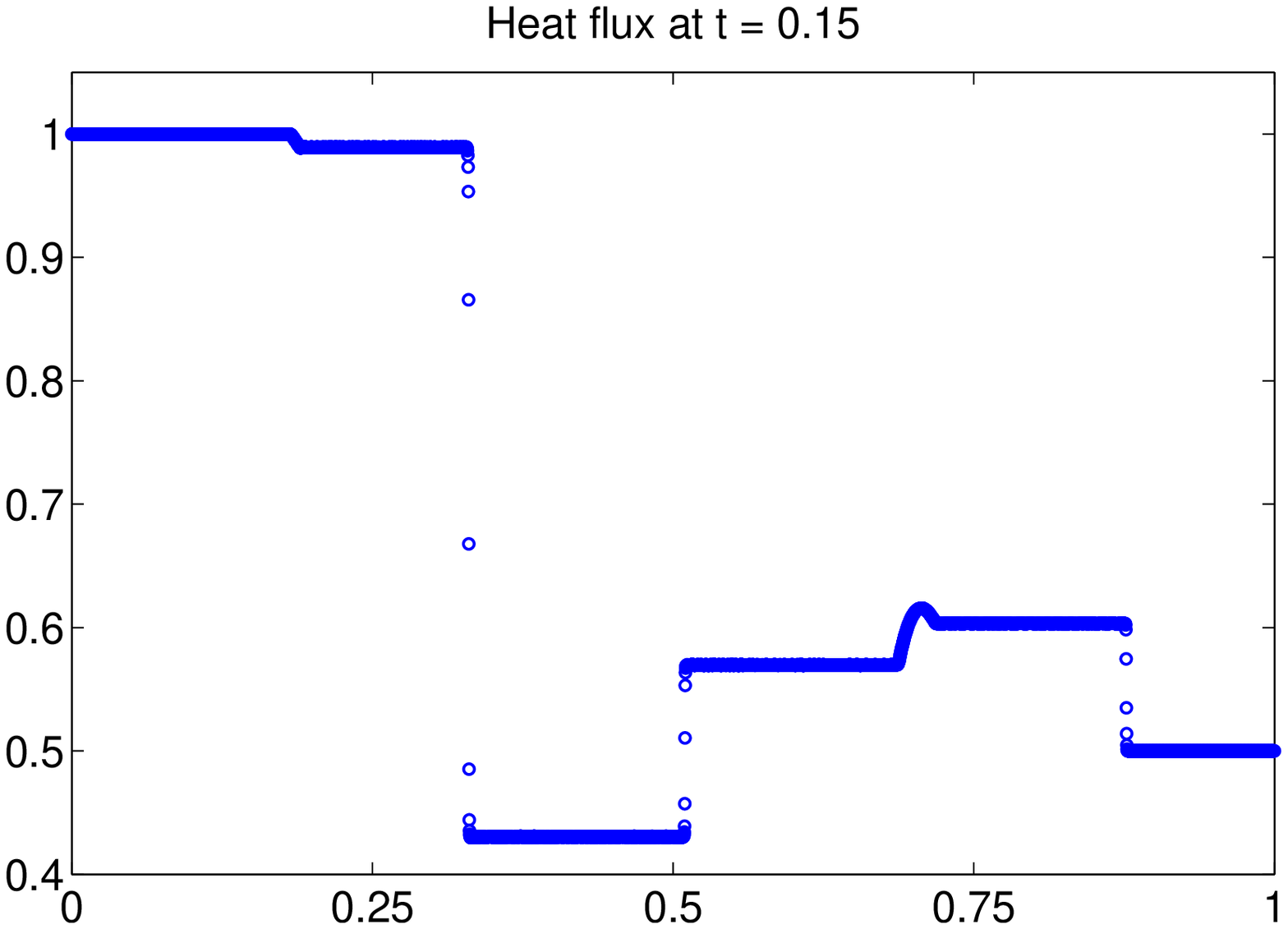}  &
    (d)\includegraphics[width=51mm]{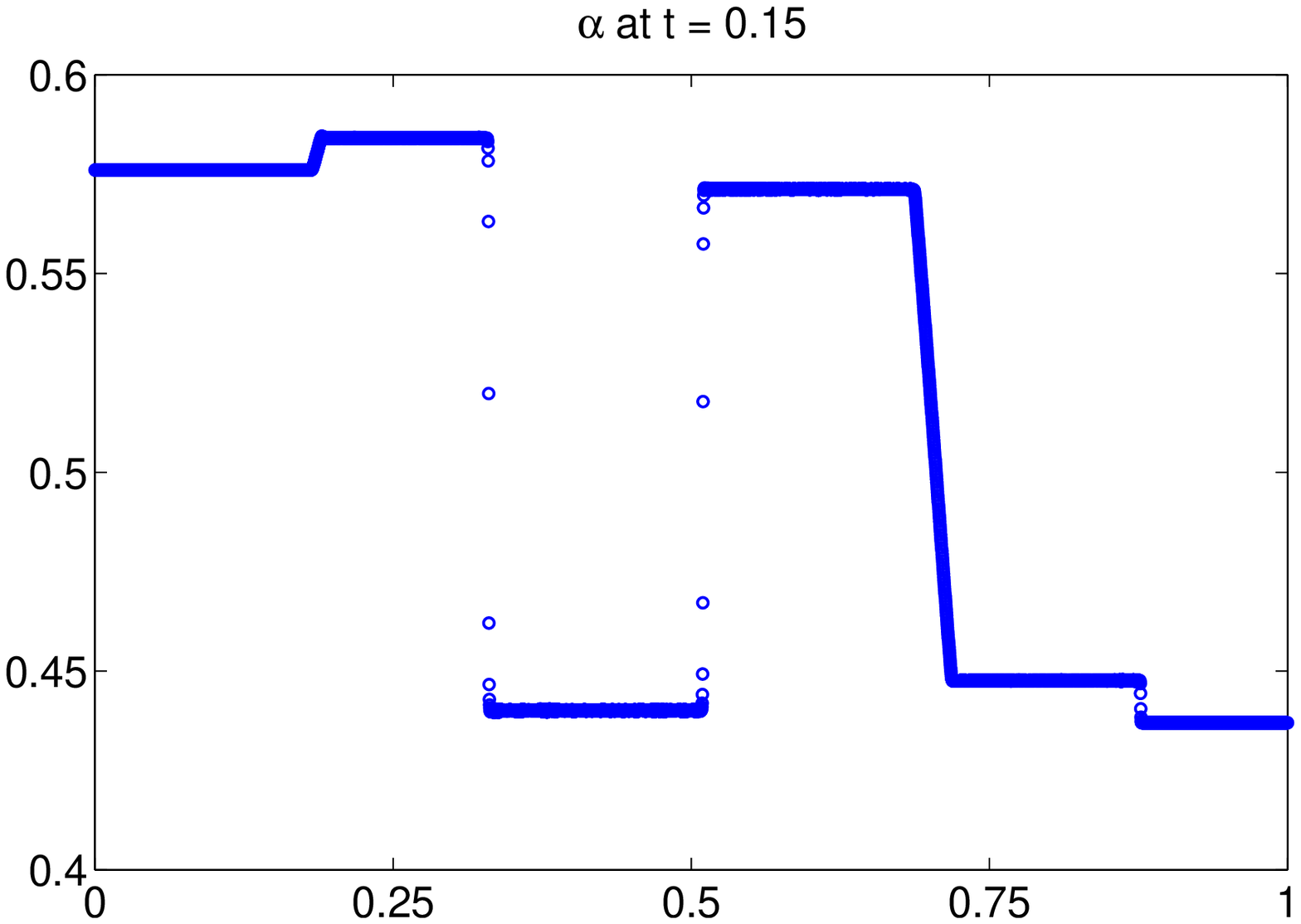} \\
   (e)\includegraphics[width=51mm]{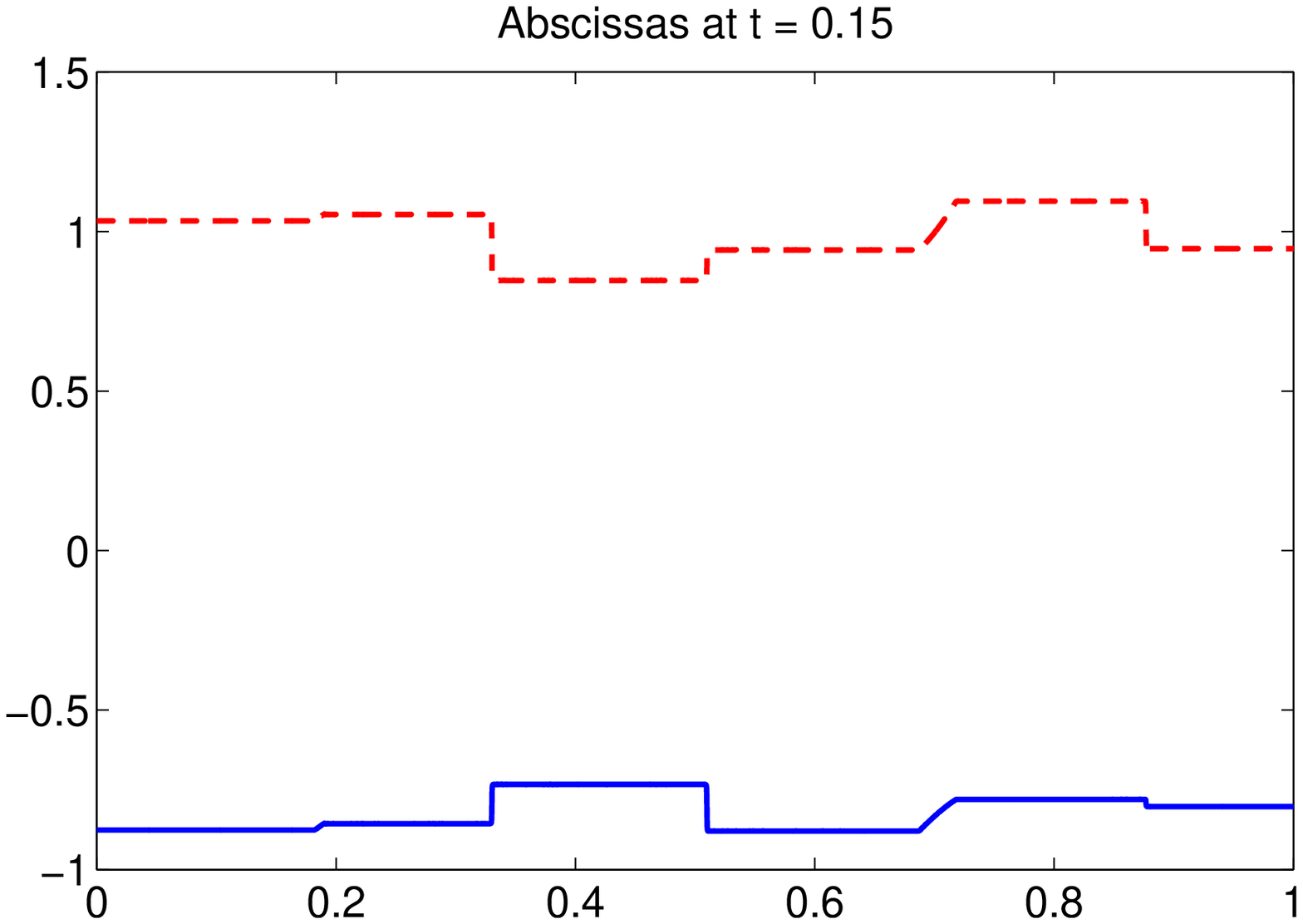} &
   (f)\includegraphics[width=51mm]{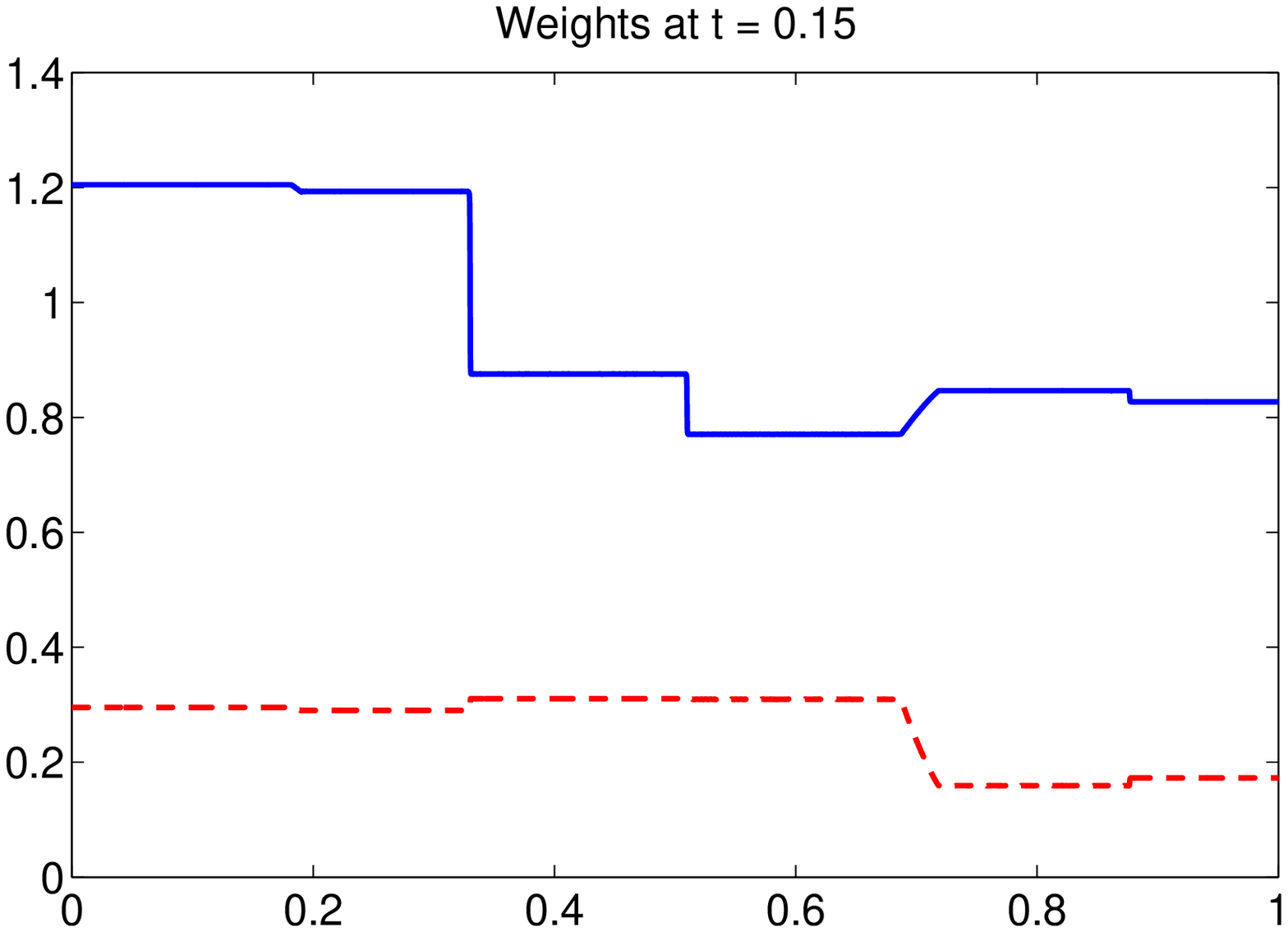}
\end{tabular}
  \caption{A shock tube problem for the bi-B-spline system. In this example
  the initial states are $(\rho, u, p, q, r)_{\text{left}} = (1.5, -0.5, 1.5, 1.0, 4.5)$ and
  $(\rho, u, p, q, r)_{\text{right}} = (1.0, -0.5, 1.0, 0.5, 3.0)$. 
  Shown in these panels are the (a) density ($\rho$), (b) pressure ($p$),
  (c) heat flux ($q$), (d) width parameter ($\alpha$), (e) quadrature abscissas ($\mu_1, \, \mu_2$), and
  (f)  quadrature weights ($\omega_1, \, \omega_2$). The resulting solution shows,
  counting waves from left to right, a 1-rarefaction, 2-shock, 3-shock, 4-rarefaction, and 5-shock.
  \label{fig:riemann_bspline}}
\end{figure}


\section{DG quadrature-based moment-closure schemes for VPFP}
\label{sec:numerical-method}
We describe in this section an application of the quadrature-based
moment approach as described in previous sections to a particular
set of equations from plasma physics: the Vlasov-Poisson-Fokker-Planck (VPFP)
equations \eqref{eqn:VPFP}. Wang and Jin \cite{article:WaJi11} recently
developed an {\it asymptotic-preserving} scheme for the VPFP equation, 
where they modified a fully kinetic solver for
VPFP so that it remains asymptotic preserving in the high-field limit
$\varepsilon \rightarrow 0^+$. Although the Wang and Jin approach
has the nice property that it can be applied for {\it any} value of
$\varepsilon>0$, if one is really interested in regimes where $\varepsilon$
is relatively small (i.e., {\it near} thermodynamic equilibrium), then their
approach is computationally expensive (i.e., requires solving a PDE in
2D rather than 1D). Our focus in this section is on approximately
solving the VPFP system using quadrature-based moment-closure techniques
that remain asymptotic-preserving in the high-field limit
$\varepsilon \rightarrow 0^+$. This approach allows us to efficiently capture
{\it near} thermodynamic equilibrium solutions.

\subsection{Strang operator splitting}
\label{sec:strang}
Wang and Jin \cite{article:WaJi11} achieve an asymptotic-preserving
scheme through the use of a clever semi-implicit time discretization.
In this work we make use of a more standard trick: Strang operator splitting \cite{article:St68},
which has been used for Vlasov-Poisson simulations since the work of
Cheng and Knorr \cite{article:ChKn76}. In particular, Schaeffer \cite{article:Sch98}
modified the Cheng and Knorr approach to construct an efficient
method for VPFP.

In our approach we use a Strang splitting for the VPFP system under
a quadrature-based moment-closure with the following 
steps:  
\algsetup{indent=2em}
\begin{algorithmic}
\STATE 1. Solve the Poisson equation:
 \[
 	-\phi_{,x,x} = \rho_{0}(x) - \rho^{n}(x), \quad E^{n} = -\phi_{,x}.
 \] 
  \STATE 2. On $\bigl[t^n, \, t^n+\frac{\Delta t}{2} \bigr]$ and for $\ell=0,1,2,3,4$ solve  
  VPFP with only the collision operator ($M^n \rightarrow 
  \widetilde{M}^{n}$):
\[
	\M{\ell,t} = \frac{1}{\varepsilon} \left\{ \ell (\ell-1) \M{\ell-2} 
   - \ell E^{n} \M{\ell-1} - \ell \M{\ell} \right\}.
\]
  \STATE 3. On $\bigl[t^n, \, t^n+{\Delta t}\bigr]$ solve the
  	{\it collisionless} quadrature-based moment-closure system \eqref{eqn:system} with 
	the appropriate definitions for $U$ and $F(U)$ ($\widetilde{M}^n \rightarrow 
  \widetilde{M}^{n+1}$).
 \STATE 4. Solve the Poisson equation:
 \[
 	-\phi_{,x,x} = \rho_{0}(x) - \widetilde{\rho}^{\, n+1}(x), \quad \widetilde{E}^{n+1} = -\phi_{,x}.
 \] 
\STATE 5. On $\bigl[t^n+\frac{\Delta t}{2}, \, t^n+{\Delta t}\bigr]$ and for $\ell=0,1,2,3,4$ solve 
VPFP with only the collision operator
($\widetilde{M}^{n+1} \rightarrow {M}^{n+1}$):
\[
	\M{\ell,t} = \frac{1}{\varepsilon} \left\{ \ell (\ell-1) \M{\ell-2} 
   - \ell \widetilde{E}^{n+1} \M{\ell-1} - \ell \M{\ell} \right\}.
\]
\end{algorithmic}

The spatial discretizations are handled via a high-order discontinuous Galerkin
discretization, which is briefly described below.

\subsection{High-order discontinuous Galerkin spatial discretization}
\label{sec:dg-method}
We make use of the discontinuous Galerkin (DG) method as developed by
Cockburn and Shu \cite{article:CoShu01} and implemented in the
{\tt DoGPack} software package \cite{dogpack} to solve hyperbolic
conservations of the form \eqref{eqn:system}. 

We begin by constructing an equally spaced numerical grid on $[a,b]$ consisting of $M$ elements,
each of the form:
 $\Tm_i =  \left[x_i - \frac{\Delta x}{2}, x_i + \frac{\Delta x}{2} \right]$,
where $\Delta x = (b-a)/M$ is the grid spacing. 
Note that $x_{i} $ denotes the center of element $\Tm_i$.
Next we define the {\it broken} finite element space
\begin{equation}
    {\mathcal V}^{\Delta x} = \left\{ v^{\Delta x} \in L^{\infty}(\Omega): \,
    v^{\Delta x} |_{\Tm_i} \in P^k \, \forall i \right\},
\end{equation}
meaning that on each element $\Tm_i$, $v^{\Delta x}$ will
be a polynomial of degree at most $k$. 
The solution, $U^{\Delta x} \in {\mathcal V}^{\Delta x}$, restricted to element $\Tm_i$
can be written as
\begin{equation}
\label{eqn:Uapprox}
    U^{\Delta x} \bigl|_{\Tm_i} = \sum_{\ell=1}^k U^{\ell}(t) \, \varphi_{\ell}(\xi),
\end{equation}
where on each element
$x = x_i + \xi \left( \Delta x/2 \right)$, 
and $\varphi(\xi)$ are the orthonormal Legendre polynomials:
\begin{equation}
	\varphi(\xi) = \left\{ 1, \, \sqrt{3} \, \xi, \, \frac{\sqrt{5}}{2} \left( 3 \xi^2 -1 \right), \ldots \right\}.
\end{equation}

In order to obtain the semi-discrete DG method we multiply  \eqref{eqn:system}
by $\varphi_{j}(\xi)$, integrate over a single element $\Tm_i$,
replace the exact $U$ by \eqref{eqn:Uapprox}, and integrate-by-parts in $x$.
After simplification, this results in the following 
set of coupled ordinary differential equations in time:
\begin{equation}
\frac{d}{dt} U^{j}_{i} = \frac{1}{\Delta x} \int_{-1}^{1} F(U) \varphi_{j,\xi} \, d\xi
- \frac{1}{\Delta x} \left[ \varphi_j(1) \Flux_{i+\frac{1}{2}} - \varphi_j(-1) \Flux_{i-\frac{1}{2}} \right],
\end{equation}
where $\Flux_{i-\frac{1}{2}}$ is the {\it numerical flux} at interface $x=x_{i-\frac{1}{2}}$,
which must be calculated from an (approximate) Riemann solver (see \S \ref{sec:riemann}
below). In order to time advance this semi-discrete scheme, we make use of the standard
third-order total variation diminishing Runge-Kutta (TVD-RK) as described in
Gottlieb and Shu \cite{article:GoShu98}. We make use of the moment-limiters described
in Krividonova \cite{article:Kriv07} to suppress unphysical oscillations when required.

Finally we note that the Poisson equation in the operator split scheme
described above is solved using a local discontinuous Galerkin scheme
that is described in detail in Rossmanith and Seal \cite{article:RossSeal11}.

\subsection{Kinetic-based Riemann solvers}
\label{sec:riemann}
One missing ingredient from the discussion of the discontinuous Galerkin
scheme in the previous section is
a description of how the numerical flux, $\Flux_{i-\frac{1}{2}}$, is computed.
Since we have the ability to reconstruct the distribution function 
$\overline{f}(t,x,v)$ for any $(t,x)$, we can use a kinetic flux-vector splitting
approach (see for example Mandal and Deshpande \cite{article:MaDe94}).
In kinetic flux-vector splitting we split the flux into right-going contributions
immediately to the left of interface $x_{i-\frac{1}{2}}$ and left-going
contributions immediately to the right of interface $x_{i-\frac{1}{2}}$:
\begin{equation}
{\mathcal F}_{i-\frac{1}{2}} = {\mathcal F}^{+}_{i-\frac{1}{2}} + {\mathcal F}^{-}_{i-\frac{1}{2}},
\end{equation}
where
\begin{gather}
  {\mathcal F}^{+}_{i-\frac{1}{2}} = \int_{0}^{\infty}  \overline{f}\left(t,x^{-}_{i-\frac{1}{2}},v \right) \, dv \quad \text{and}
  \quad
  {\mathcal F}^{-}_{i-\frac{1}{2}} = \int_{-\infty}^{0} \overline{f}\left(t,x^{+}_{i-\frac{1}{2}},v \right) \, dv.
\end{gather}

\subsection{Stiff source term solution and the asymptotic-preserving condition}
 The final missing part of the Strang operator split algorithm presented in \S \ref{sec:strang}
 is the solver for the collision operator. A big advantage of considering fluid solvers
 over kinetic solvers in the context of VPFP is that the diffusion operator in $v$
 becomes an ODE for the moments. In particular, in the Strang split
 approach detailed in  \S \ref{sec:strang}, the electric field, $E(t,x)$, is
 frozen in time during each of the collision operator steps, meaning that
 the resulting ODEs are linear constant coefficient equations that
 can easily be solved exactly.
 The full solution over a time step $\left[t^n, t^n + \Delta t \right]$ with initial 
 data $\left( \M{0}^{n}, \M{1}^{n}, \M{2}^{n}, \M{3}^{n}, \M{4}^{n} \right)$ is 
 \begin{align}
\M{0}^{n+1} &= \M{0}^{n}, \\
\M{1}^{n+1} &= Z \left( E \M{0}^{n}+ \M{1}^{n} \right) - E \M{0}^{n}, \\
\begin{split}
 \M{2}^{n+1} &= Z^2 \left( (E^2 - 1) \M{0}^{n}+ 2 E \M{1}^{n} + \M{2}^{n} \right)  \\ &-2 E Z \left(E \M{0}^{n} + \M{1}^{n} \right) 
 +\left(1+E^2\right)  \M{0}^{n},
 \end{split} \\
 \begin{split}
 \M{3}^{n+1} &= Z^3 \left(E \left(E^2 - 3\right) \M{0}^n + 3 \left(E^2 - 1\right) \M{1}^n + 3E \M{2}^n + \M{3}^n\right) 
 \\ &- 3E Z^2 \left( \left(E^2 - 1 \right) \M{0}^n + 2E \M{1}^n + \M{2}^n\right) \\ &+ 3 \left(E^2 + 1\right) Z \left(E \M{0}^n + \M{1}^n\right) - E \left(E^2 + 3 \right) \M{0}^n,
\end{split} \\
\begin{split}
\M{4}^{n+1} &= Z^4 \bigl( \left( E^4 - 6 E^2 + 3 \right) \M{0}^n + 4 E \left(E^2 - 3 \right) \M{1}^n + 
      6 \M{2}^n \left(E^2 - 1 \right) \\ &+ 4 E \M{3}^n + \M{4}^n \bigr) - 
   4 E Z^3 \left(E \left(E^2 - 3\right) \M{0}^n + 3 \M{1}^n \bigl(E^2 - 1\right) \\ &+ 3E \M{2}^n 
   + \M{3}^n \bigr) + 
   6\left(E^2 + 1\right) Z^2 \left(\left(E^2 - 1\right) \M{0}^n + 2E\M{1}^n + \M{2}^n \right) \\ &- 
   4E \left(E^2 + 3\right) Z \left( E \M{0}^n + \M{1}^n \right) + \left(E^4 + 6 E^2 + 3\right) \M{0}^n,
 \end{split}
 \end{align}
 where
 $E = E^{n}$ and $Z = \exp\left[-\Delta t/\varepsilon\right]$.
 
 \section{Numerical simulations in the high-field limit}
 \label{sec:numerical-examples}
In order to verify the proposed DG operator split method
using the quadrature-based moment-closure we consider
two test cases from Wang and Jin \cite{article:WaJi11}:
(1) verification of the asymptotic-preserving property and
(2) a periodic Riemann problem. All of these
problems are defined on $[0,1]$ with periodic boundary conditions.

\subsection{Verification of the asymptotic-preserving property}
In order to verify the asymptotic-preserving property of the
proposed scheme, we attempt two versions of the same problem
 from Wang and Jin \cite{article:WaJi11}.
 
For the first problem we start with an isothermal Gaussian:
\begin{align}
\label{eqn:wang_ic_f}
f(0,x,v) &= \frac{\rho(0,x)}{\sqrt{2\pi}} \exp\left[ -\frac{1}{2} \left( v + E(0,x) \right)^2 \right], \\
\rho(0,x) &= \frac{\sqrt{2\pi}}{2} \left( 2 + \cos\left(2\pi x\right) \right).
\end{align}
with a neutralizing background charge of
\begin{equation}
 \rho_{0}(x) =  \frac{\sqrt{2\pi}}{1.2660658777520083} \exp\left[\cos(2\pi x) \right].
\end{equation}
The quantity $\| M_{1} + \rho E \|_{L^2}$ is plotted for various $\varepsilon$
as a function of time in Figure \ref{fig:ap}(a). These results verify that
$\| M_{1} + \rho E \|_{L^2} = {\mathcal O}\left( \varepsilon \right)$ for all $t$.

For the second problem we start with initial data that is not in isothermal
equilibrium:
\begin{equation}
f(0,x,v) = \frac{\rho(0,x)}{2\sqrt{2\pi}}\left( \exp\left[ -\frac{1}{2} \left( v + 1.5 \right)^2 \right]
+ \exp\left[ -\frac{1}{2} \left( v - 1.5 \right)^2 \right] \right),
\end{equation}
with the same $\rho(0,x)$ and neutralizing background charge 
as in the previous example.
The quantity $\| M_{1} + \rho E \|_{L^2}$ is plotted for various $\varepsilon$
as a function of time in Figure \ref{fig:ap}(b). These results verify that
the numerical schemes immediately drives the non-equilibrium initial
data near the equilibrium distribution such that
$\| M_{1} + \rho E \|_{L^2} = {\mathcal O}\left( \varepsilon \right)$.

The simulations presented in Figure \ref{fig:ap} were done with
the bi-Gaussian moment-closure. The bi-delta and bi-B-spline
methods give near identical results for this problem.

\begin{figure}
\begin{center}
\begin{tabular}{c}
   (a)\includegraphics[width=70mm]{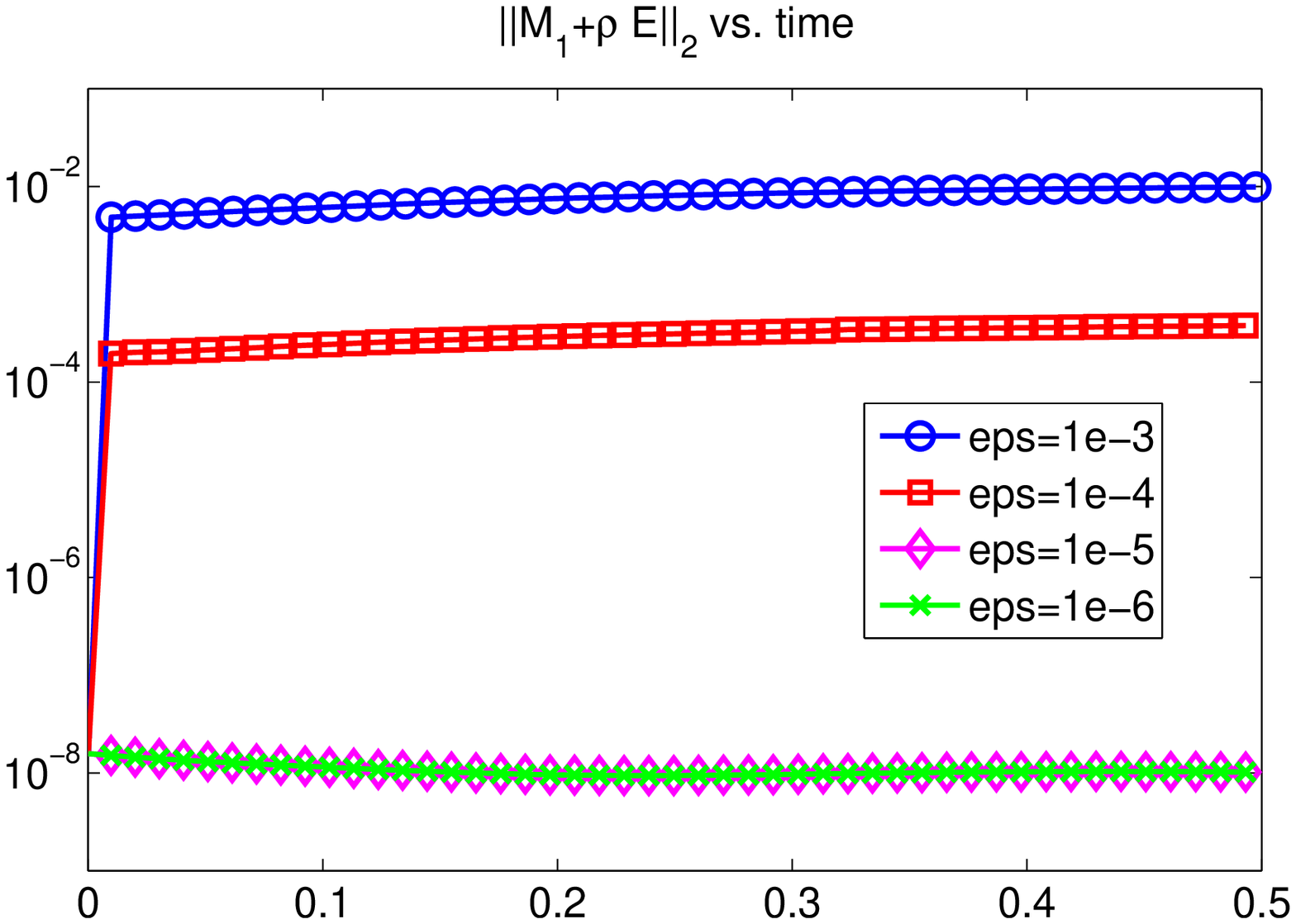} \\
   (b)\includegraphics[width=70mm]{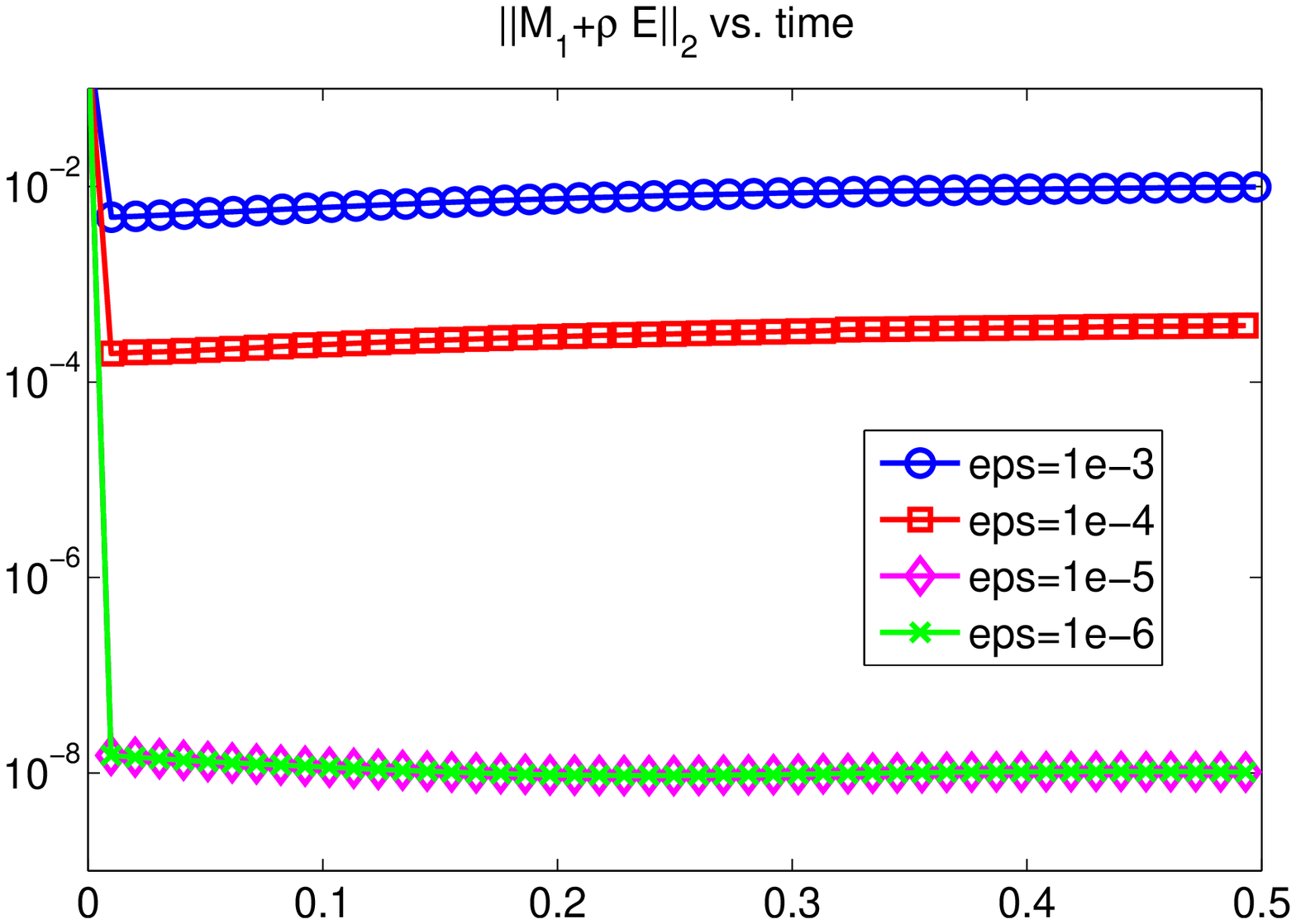}
\end{tabular}
  \caption{Verification of the asymptotic preserving capability
  of the method. Shown in these two plots are the $L^2$-norms
  of the difference in the fluid mean velocity ($u$) and the equilibrium
  mean velocity ($-\rho E$) as a function of time. 
  In Panel (a) the initial conditions are already
  in equilibrium, while in Panel (b) the initial conditions is
  not in equilibrium. We show only every 20th time step
  value so that the various curves are more easily identified.
  We note that for $\varepsilon=10^{-5}$ and $\varepsilon=10^{-6}$,
  the resolution in the simulations is not enough to resolve
  the non-equilibrium deviations; and therefore, the difference
  between $u$ and $-\rho E$ is negligible. All runs were done
  with 64 mesh elements. \label{fig:ap}}
\end{center}
\end{figure}

\subsection{Double periodic Riemann problem}
The initial data is the distribution function \eqref{eqn:wang_ic_f}
with
\begin{equation}
\left( \rho(0,x), \, \rho_0(x) \right) = 
  \begin{cases}
  	\left( 1/8, \, 1/2 \right) & \text{if} \, \, 0 \le x < 1/4, \\
	\left( 1/2, \, 1/8 \right) & \text{if} \, \, 1/4 \le x < 3/4, \\
	\left( 1/8, \, 1/2 \right) & \text{if} \, \, 3/4 \le x \le 1.
  \end{cases}	
\end{equation}
The solution using the bi-B-spline moment-closure 
is shown in Figure \ref{fig:priemann}. The results agree well with those in
Wang and Jin \cite{article:WaJi11}. The bi-Gaussian moment-closure
has difficulties with this problem due to the steep gradients in the
solution in regions where $\alpha$ is small but non-zero. 

\begin{figure}
\begin{center}
\begin{tabular}{c}
   (a)\includegraphics[width=70mm]{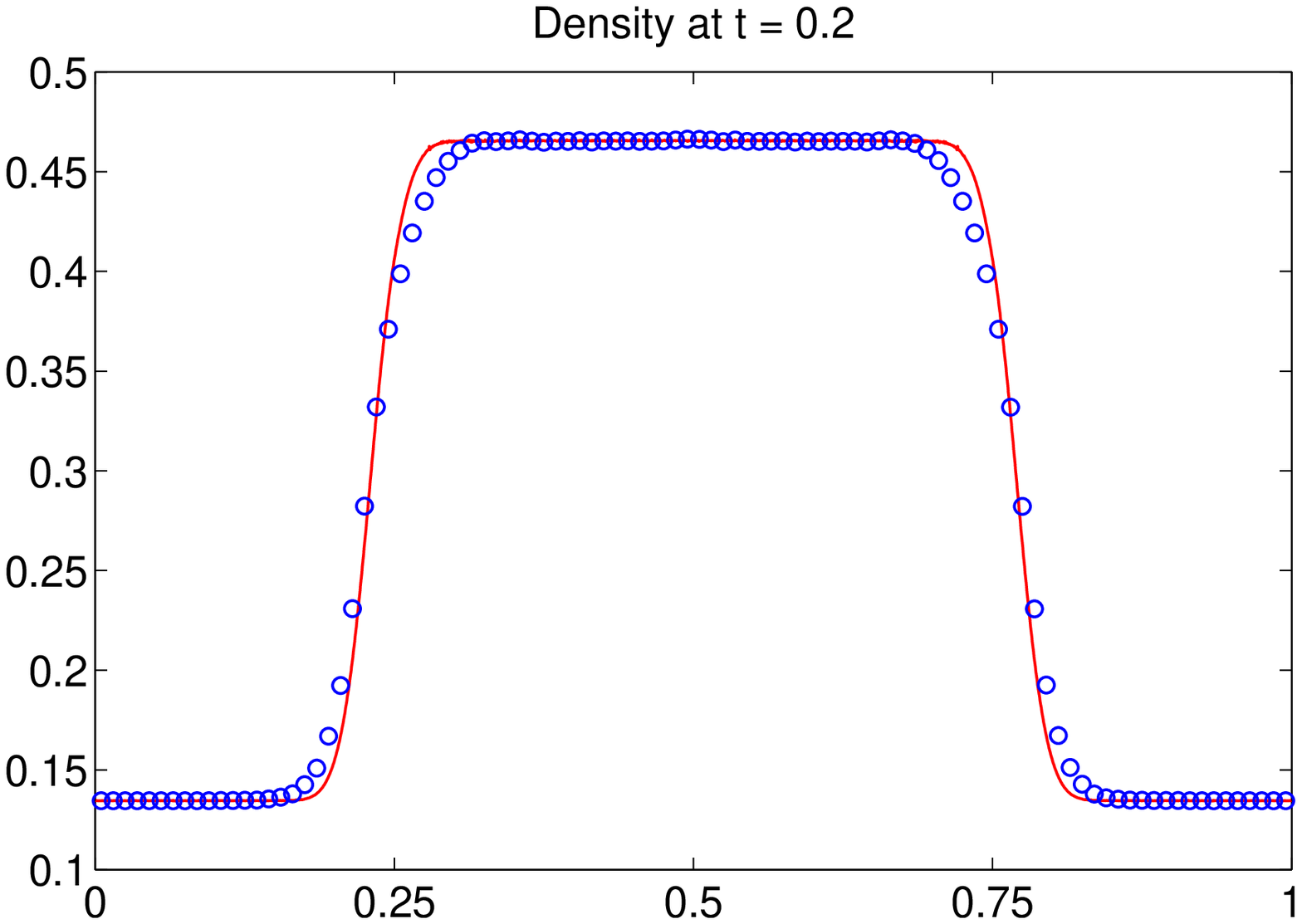}\\
   (b)\includegraphics[width=70mm]{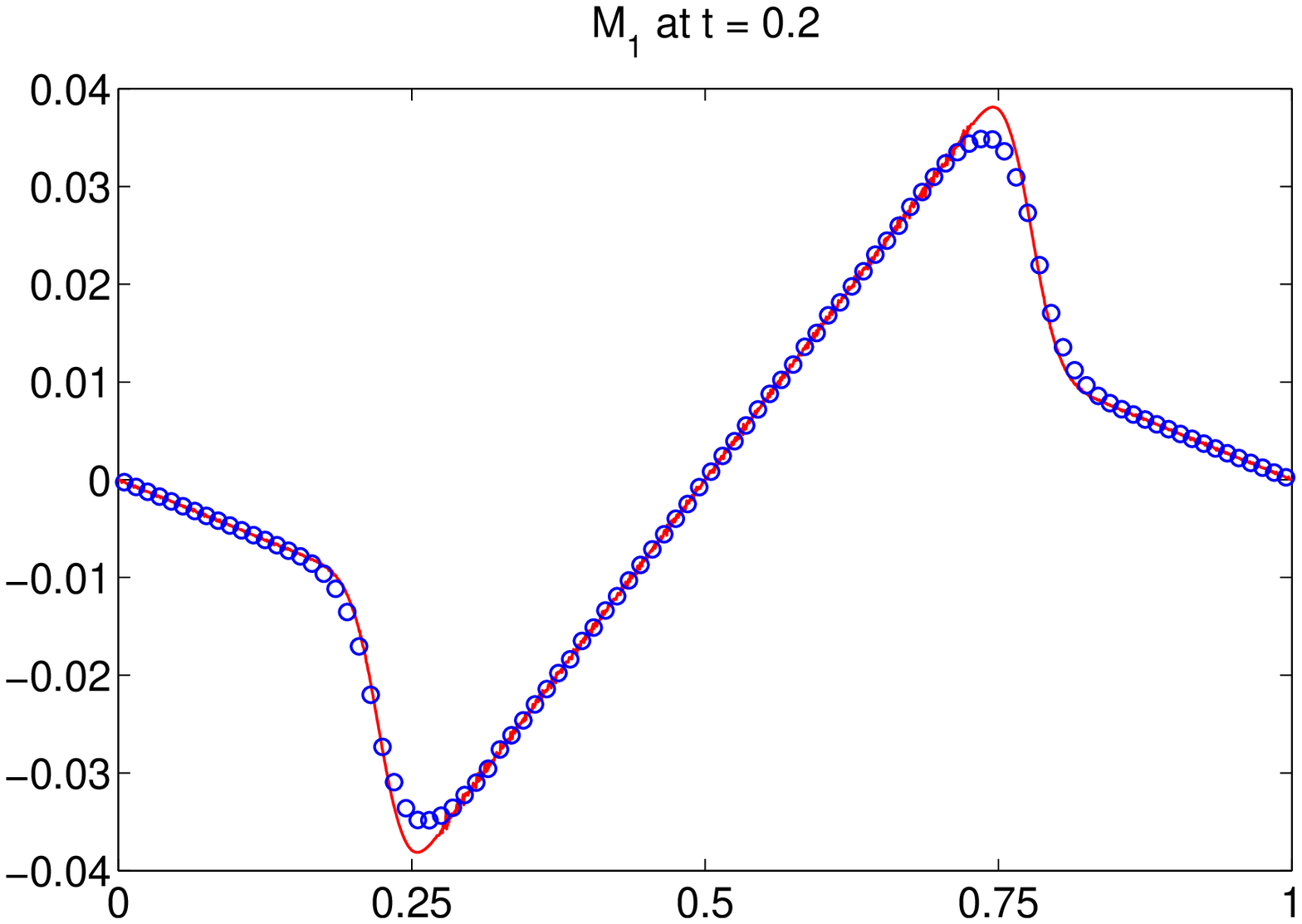}
\end{tabular}
  \caption{Periodic Riemann problem of Wang and Jin \cite{article:WaJi11}.
  Shown are the solutions with 100 elements (blue circles) and 2000 elements
  (red line). This simulation is difficult for the bi-Gaussian moment-closure
  due to the steep gradients in the solution in regions where $\alpha$ is
  small but non-zero. These simulations were instead done with the bi-B-spline
  moment-closure and the results show good agreement with the results of
  Wang and Jin \cite{article:WaJi11}. \label{fig:priemann}}
\end{center}
\end{figure}
 
 \section{Summary}
 In this work we considered quadrature-based moment-closure
 methods using two quadrature points. We briefly investigated
 the properties of these methods and showed connections
 between bi-delta, bi-Gaussian, and bi-B-spline quadrature
 methods. We then applied this formulation to the Vlasov-Poisson-Fokker-Planck
 system in the high-field limit, and, using a high-order discontinuous Galerkin
 scheme with Strang operator splitting, verified the scheme on 
 two test problems. 
 Future work will focus on multidimensional plasma physics
 applications.
 
 \bigskip

\noindent
{\bf Acknowledgements.}
This work was supported in part by NSF grant DMS--1016202.


\begin{thebibliography}{10}

\bibitem{article:ArCaGaSh01}
A.~Arnold, J.-A. Carillo, I.M. Gamba, and C.-W. Shu.
\newblock Low and high field scaling limits for the {V}lasov- and
  {W}igner-{P}oisson-{F}okker-{P}lanck system.
\newblock {\em Transport Theory Statist. Phys.}, 30:121--153, 2001.

\bibitem{article:BoCaSo97}
L.L. Bonilla, J.A. Carrillo, and J.~Soler.
\newblock Asymptotic behavior of an initial-boundary value problem for the
  {V}lasov-{P}oisson-{F}okker-{P}lanck system.
\newblock {\em SIAM J. Appl. Math.}, 57:1343--1372, 1997.

\bibitem{article:BoSo01}
L.L. Bonilla and J.S. Soler.
\newblock High-field limit of the {V}lasov-{P}oisson--{F}okker-{P}lanck system:
  {A} comparison of different perturbation methods.
\newblock {\em Math. Models Methods Appl. Sci.}, 11:1457--1468, 2001.

\bibitem{article:Bou94}
F.~Bouchut.
\newblock On zero pressure gas dynamics.
\newblock In {\em Advances in Kinetic Theory and Computing}, volume~22, pages
  171--190. World Scientific, 1994.

\bibitem{article:broad64a}
J.E. Broadwell.
\newblock Shock structure in a simple discrete velocity gas.
\newblock {\em Phys. Fluids}, 7:1243--1247, 1964.

\bibitem{article:broad64b}
J.E. Broadwell.
\newblock Study of rarefied shear flow by the discrete velocity method.
\newblock {\em J. Fluid Mech.}, 19:401--414, 1964.

\bibitem{article:BuFa05}
R.L. Burden and J.D. Faires.
\newblock {\em Numerical Analysis}.
\newblock Thomson Brooks/Cole, 8th edition, 2005.

\bibitem{CeGaLe97}
C.~Cercignani, I.M. Gamba, and C.D. Levermore.
\newblock High field approximations to a {B}oltzmann-{P}oisson system and
  boundary conditions in a semiconductor.
\newblock {\em Applied Mathematics Letters}, 10(4):111--117, 1997.

\bibitem{article:Chalons10}
C.~Chalons, R.O. Fox, and M.~Massot.
\newblock A multi-{G}aussian quadrature method of moments for gas-particle fows
  in a {LES} framework.
\newblock In {\em Proceedings of the Summer Program}, pages 347--358. Center
  for Turbulence Research, 2010.

\bibitem{article:ChKaMa12}
C.~Chalons, D.~Kah, and M.~Massot.
\newblock Beyond pressureless gas dynamics: {Q}uadrature-based velocity moment
  models.
\newblock {\em Comm. Math. Sci.}, 10:1241--1272, 2012.

\bibitem{article:ChLiu03}
G.-Q. Chen and H.~Liu.
\newblock Formation of $\delta$-shocks and vacuum states in the vanishing
  pressure limits of solutions to teh euler equations.
\newblock {\em SIAM J. Math. Anal.}, 34:923--938, 2003.

\bibitem{article:ChKn76}
C.~Cheng and G.~Knorr.
\newblock The integration of the {V}lasov equation in configuration space.
\newblock {\em J. Comp. Phys.}, 22:330--351, 1976.

\bibitem{article:CoShu01}
B.~Cockburn and C.-W. Shu.
\newblock {R}unge-{K}utta discontinuous {G}alerkin methods for
  convection-dominated problems.
\newblock {\em J. Sci. Comput.}, 16:173--261, 2001.

\bibitem{article:fox09}
R.O. Fox.
\newblock Higher-order quadrature-based moment methods for kinetic equations.
\newblock {\em J. Comp. Phys.}, 228:7771---7791, 2009.

\bibitem{article:Go03}
L.~Gosse, S.~Jin, and X.~Li.
\newblock Two moment systems for computing multiphase semiclassical limits of
  the {S}chr\"odinger equation.
\newblock {\em Math. Models Methods Appl. Sci.}, 13:1689--1723, 2003.

\bibitem{article:GoShu98}
S.~Gottlieb and C.-W. Shu.
\newblock Total variation diminshing {R}unge-{K}utta schemes.
\newblock {\em Math. of Comput.}, 67:73--85, 1998.

\bibitem{article:Jin03}
S.~Jin and X.~Li.
\newblock Multi-phase computations of the semiclassical limit of the
  {S}chr\"odinger equation and related problems: {W}hitham vs. {W}igner.
\newblock {\em Phys. D}, 182:46---85, 2003.

\bibitem{article:WaJi11}
S.~Jin and L.~Wang.
\newblock {An asymptotic preserving scheme for the Vlasov-Poisson-Fokker-Planck
  system in the high field regime}.
\newblock {\em Acta Math. Sci.}, 31B:2219--2232, 2011.

\bibitem{article:Kriv07}
L.~Krivodonova.
\newblock Limiters for high-order discontinuous {G}alerkin methods.
\newblock {\em J. Comp. Phys.}, 226:879--896, 2007.

\bibitem{book:Le02}
R.J. LeVeque.
\newblock {\em Finite Volume Methods for Hyperbolic Problems}.
\newblock Cambridge University Press, 2002.

\bibitem{article:Li04}
X.~Li, J.G. W\"ohlbier, S.~Jin, and J.H. Booske.
\newblock An {E}ulerian method for computing multi-valued solutions of the
  {E}uler-{P}oisson equations and applications to wave breaking in klystrons.
\newblock {\em Phys. Rev. E}, 70(016502), 2004.

\bibitem{article:Liu74}
T.P. Liu.
\newblock {The Riemann problem for general $2\times 2$ conservation laws}.
\newblock {\em Trans. Amer. Math. Soc.}, 199:89--112, 1974.

\bibitem{article:MaDe94}
J.C. Mandal and S.M. Deshpande.
\newblock Kinetic flux vector splitting for euler equations.
\newblock {\em Comput. Fluids}, 23:447--478, 1994.

\bibitem{article:nieto01}
J.~Nieto, F.~Poupaud, and J.~Soler.
\newblock High-field limit for the {V}lasov-{P}oisson-{F}okker-{P}lanck system.
\newblock {\em Arch. Ration. Mech. Anal.}, 158:20--59, 2001.

\bibitem{dogpack}
J.A. Rossmanith.
\newblock {DoGPack: Discontinuous Galerkin Package}.
\newblock {\tt http://www.dogpack-code.org}.

\bibitem{article:RossSeal11}
J.A. Rossmanith and D.S. Seal.
\newblock {A positivity-preserving high-order semi-Lagrangian discontinuous
  Galerkin scheme for the Vlasov-Poisson equations}.
\newblock {\em J. Comp. Phys.}, 230:6203--6232, 2011.

\bibitem{article:Sch98}
J.~Schaeffer.
\newblock {Convergence of a difference scheme for the
  Vlasov-Poisson-Fokker-Planck system in one dimension}.
\newblock {\em SIAM J. Numer. Anal.}, 35:1149--1175, 1998.

\bibitem{article:St68}
G.~Strang.
\newblock On the construction and comparison of difference schemes.
\newblock {\em SIAM J. Numer. Anal.}, 5:506--517, 1968.

\bibitem{article:yuan11}
C.~Yan and R.O. Fox.
\newblock Conditional quadrature method of moments for kinetic equations.
\newblock {\em J. Comp. Phys.}, 230:8216---8246, 2011.

\end{thebibliography}
\end{document}